\newtheorem{thmx}{Theorem}[section]
\newtheorem{thm}{Theorem}[section]
\newtheorem{prop}[thm]{Proposition}
\newtheorem{lem}[thm]{Lemma}
\newtheorem{cor}[thm]{Corollary}
\theoremstyle{definition}
\newtheorem{defn}[thm]{Definition}
\newtheorem{ex}[thm]{Example}
\newtheorem{rmk}[thm]{Remark}
\newtheorem*{assu*}{Assumption}
\newtheorem*{conv*}{Convention}
\newtheorem{conv}{Convention}
\newtheorem*{fact*}{Fact}
\newtheorem*{ind*}{Induction hypothesis}
\newtheorem*{goal*}{Goal}
\newcommand{\C}{\mathbb C}
\newcommand{\Z}{\mathbb Z}
\newcommand{\N}{\mathbb{N}}
\newcommand{\Pp}{\mathbb P}
\newcommand{\X}{\mathfrak X}
\newcommand{\E}{\mathcal E}
\newcommand{\lin}{\mathfrak l}
\newcommand{\sO}{\mathcal O}
\newcommand{\NE}{\text{NE}}
\newcommand{\bM}{\overline{\mathcal M}}
\newcommand{\comp}{\mathfrak c}
\newcommand{\D}{P}
\author{H.~Fan}
\email{<honglu.fan@math.ethz.ch>}
\author{Y.-P.~Lee}
\email{<yplee@math.utah.edu>}
\email{<ernst.schulteg@t-online.de>}
\date{\today}
\title[Towards QLHT in all genera]
{Towards a quantum Lefschetz hyperplane theorem in all genera}
\begin{document}

\maketitle

\begin{abstract}
An effective algorithm of determining Gromov--Witten invariants of smooth hypersurfaces in any genus (subject to a degree bound \eqref{e:degbound}) from Gromov--Witten invariants of the ambient space is proposed.
\end{abstract}


\section{Introduction}

\subsection{A brief history}
Let $D\subset X$ be a smooth hypersurface in a smooth projective variety. One important question in Gromov--Witten theory is to determine all genus Gromov--Witten invariants of $D$ by the invariants of $X$. An example is the quintic hypersurface in $\Pp^4$. In genus $0$, this relationship enters A.~ Givental's mirror theorem (in for example \cite{Giv4}) as a key step to compute genus $0$ invariants for the quintic 3-folds, or more generally semipositive complete intersections in the projective spaces. This genus zero relationship has been further generalized into the quantum Lefschetz hyperplane theorem in \cites{MR1719555, MR1839288, CG}, relating genus-$0$ Gromov--Witten theory of an ample hypersurface from the (twisted) theory of the ambient space.
One ingredient in this genus zero quantum Lefschetz hyperplane theorem is the functoriality of virtual classes based on a classical idea in enumerative geometry and proven in \cites{Kon, KKP}.

In high genus, a na\"ive generalization of the main result in \cite{KKP} is not valid. That is, the Gromov--Witten theories of hypersurfaces are very different from the twisted theories of the ambient spaces. A simple example is a quartic curve in a quartic surface. Although there is no completely satisfactory proposal of the quantum Lefschetz in higher genus, there have been various approaches, most of which put a special focus on the quintic hypersurface of $\Pp^4$.  In genus $1$, there are works in \cites{Z,LK} among others. In higher genus there is an ongoing \emph{mixed spin $P$-field} approach pioneered by H.-L.~Chang, J.~Li, W.-P.~Li and C.-C.~Liu in, e.g., \cites{CLLL, CGLZ}.  More recently, while this article was under preparation, S.~Guo, F.~Janda and Y.~Ruan in \cite{GJR} announced a new method which determines higher genus invariants of quintic $3$-folds via twisted invariants of the ambient space $\Pp^4$ plus some ``effective invariants'' which are finite for a fixed genus. In the same paper the authors used this in genus $2$ to prove the BCOV holomorphic anomaly conjecture \cite{BCOV2}, and we expect it to be useful in higher genus. There are other interesting works on quintic $3$-folds including \cites{GR1,GR2} and many others. In general, the question remains: To what degree can the Gromov--Witten invariants of a hypersurface of all genera be determined by those of the ambient space?

In this paper, we would like to propose a different method of computing the Gromov--Witten invariants of a hypersurface. Although our current algorithm is subjected to a degree bound \eqref{e:degbound}, similar bounds also explicitly or inexplicitly appear in other works. For example, \cite[Conjecture 1.5]{W} proposed exactly the same bound in determining quintic invariants. Meanwhile, in the work of \cite{GJR}, $2g-2\geq 5d$ appears as a condition of their effective invariants which are needed in computing the quintic invariants. We hope that this different approach provides a new angle and might be of independent interest in Gromov--Witten theory.

\subsection{Weak form of the quantum Lefschetz hyperplane theorem}

Let $D\subset X$ be a smooth hypersurface in a smooth projective variety $X$. Let $\ell \in \operatorname{NE}(D)$ be an effective curve class in $D$. We are only concerned about the cohomology insertions that are restrictions of classes in $X$. We make the following definition.
\begin{defn}\label{defn:nonprim}
Consider the morphism $H^*(X)\rightarrow H^*(D)$ given by the restriction under the inclusion of $D$ into $X$. Denote its image as the following.
\[ H^*(D)_X=im(H^*(X)\rightarrow H^*(D)).\]
\end{defn}
We also make the following convention, which is consistent with all previous results on the quantum Lefschetz hyperplane theorem. 
\begin{conv}\label{conv:nonprim}
When \emph{GW invariants of $D$} is mentioned, we always refer to only invariants whose cohomology insertions lie in $H^*(D)_X$.
\end{conv}
Now we are ready to present some consequences of the results in Section~\ref{s:1.2}.

\begin{thm} \label{t:main}
When $D$ is an ample hypersurface in $X$, the (descendant) Gromov--Witten invariants of $D$ for a fixed genus $g_0$ (of any degree) are determined by GW invariants of $X$ and a \emph{finite number} of GW invariants of $D$ with $g \le g_0$ and 
\begin{equation} \label{e:degbound}
(\ell, D) \leq 2g -2.
\end{equation}

When $D$ is an ample Calabi--Yau three-dimensional hypersurface in $X$, all descendant Gromov--Witten invariants of fixed genus $g$ and effective curve class $\ell \in \operatorname{NE}(D)$ 
can be expressed in terms of (descendant) Gromov--Witten invariants of $X$ and a finite number of invariants of $D$
\[
 \langle \, \rangle^D_{g', 0, \ell'}, \quad \text{for} \,  g' \leq g, \, \ell' < \ell, \text{and} \, (\ell', D) \leq 2g' -2,
\]
where $\langle \, \rangle^D_{g', 0, \ell'}$ is the genus $g'$, $0$-pointed GW invariants on $D$ with class $\ell'$.
In particular, when $D =Q$ is the quintic threefold in $\Pp^4$, and when $g\leq 3$, the GW invariants of $D$ are completely determined.
\end{thm}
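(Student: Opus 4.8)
The plan is to deduce both determination statements from the comparison established in Section~\ref{s:1.2}, which I will use in the following schematic form: after pushing forward to the moduli space of stable maps to $X$, the virtual class $[\bM_{g,n}(D,\ell)]^{\mathrm{vir}}$ is computed by capping $[\bM_{g,n}(X,\ell)]^{\mathrm{vir}}$ against a twist by $R\pi_* f^*\sO(D) = R^0\pi_* f^*\sO(D) - R^1\pi_* f^*\sO(D)$. On a smooth domain curve of genus $g$, Serre duality gives $R^1\pi_* f^*\sO(D)=0$ exactly when $\deg f^*\sO(D) = (\ell,D) > 2g-2$; in this \emph{good range} the twist is by the honest bundle $R^0\pi_* f^*\sO(D)$ and the comparison is the clean functoriality statement, so that the invariants of $D$ equal $\sO(D)$-twisted invariants of $X$, which are in turn determined by the untwisted Gromov--Witten invariants of $X$ through the standard twisting procedure. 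In the complementary \emph{bad range} $(\ell,D)\le 2g-2$ the class $R^1\pi_* f^*\sO(D)$ is nonzero, the twist becomes genuinely $K$-theoretic, and the comparison acquires correction terms; the whole content of the theorem is the bookkeeping of these corrections.

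For the ample case I would fix $g_0$ and induct on the degree $(\ell,D)$, and within each degree on $g\le g_0$. Good-range invariants $(\ell,D)>2g-2$ are read off from the twisted theory of $X$ with no further input. The correction terms in the bad range are supported on the boundary of $\bM_{g,n}(D,\ell)$, hence factor through $D$-invariants of strictly smaller total degree or genus, each of which is already known by induction or is itself a bad-range invariant. One then simply declares the bad-range invariants to be the required input, and this input is \emph{finite}: ampleness of $D$ forces $\{\ell:(\ell,D)\le 2g_0-2\}$ to be finite, while for each admissible $(g,\ell)$ the virtual dimension bounds both the number of insertions and the descendant orders allowed from the finite-dimensional space $H^*(D)_X$, leaving only finitely many nonzero invariants.

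For the Calabi--Yau threefold case the virtual dimension of $\bM_{g,n}(D,\ell)$ equals $n$, and all insertions lie in $H^*(D)_X$ by Convention~\ref{conv:nonprim}. Combining the string, dilaton and divisor equations with the comparison---which matches descendant invariants of $D$ against descendant twisted invariants of $X$ up to corrections of strictly smaller curve class---and iterating the resulting degree-lowering recursion, I would reduce every descendant invariant of fixed $(g,\ell)$ to the $X$-determined part together with $0$-pointed primary numbers $\langle\,\rangle^D_{g',0,\ell'}$ with $g'\le g$ and $\ell'<\ell$. The irreducible seeds among these are exactly the bad-range numbers $(\ell',D)\le 2g'-2$, as asserted. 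When $D=Q\subset\Pp^4$ is the quintic and $g\le 3$, one has $\operatorname{Pic}(Q)=\Z$, so $\ell=d\,[\text{line}]$ and $(\ell,D)=5d$; the bad-range condition $5d\le 2g-2\le 4$ forces $d=0$, whose $0$-pointed contributions are classical constant-map Hodge integrals. Hence no genuine invariant of $Q$ is needed, and its theory is completely determined by that of $\Pp^4$.

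I expect the main obstacle to be not the bookkeeping above but the precise form of the comparison of Section~\ref{s:1.2}, namely the analysis of $R^1\pi_* f^*\sO(D)$ over the \emph{entire} compactified moduli space. On nodal domains the genus and degree redistribute among the components, so $H^1$ can survive on subcurves even when the total class lies in the good range, and the correction terms are correspondingly subtle. The crux is to show that every such boundary contribution genuinely involves $D$-data of strictly smaller $\big((\ell,D),\,g\big)$, and in the Calabi--Yau case of strictly smaller curve class, so that the induction closes; controlling these terms is the technically delicate heart of the argument.
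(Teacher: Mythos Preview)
The mechanism you propose is not what the paper does, and is in fact precisely the na\"ive higher-genus generalization of the Kim--Kresch--Pantev functoriality that the Introduction explicitly warns against. You assume that in the range $(\ell,D)>2g-2$ the pushforward of $[\bM_{g,n}(D,\ell)]^{\mathrm{vir}}$ agrees with the $\sO(D)$-twisted class on $\bM_{g,n}(X,\ell)$ because $R^1\pi_*f^*\sO(D)=0$ on smooth domains, with corrections appearing only in the bad range. This is false: even in the good range the direct comparison fails, since (as you yourself note at the end) $H^1$ survives on nodal domains regardless of total degree, and there is no mechanism that packages these boundary failures into corrections of strictly smaller data. No amount of bookkeeping closes that induction; this is exactly why the paper says ``a na\"ive generalization of the main result in \cite{KKP} is not valid.''

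The actual comparison of Section~\ref{s:1.2} is of an entirely different nature. One introduces the master space $\X=\mathrm{Bl}_{D\times\{0\}}(X\times\Pp^1)$ with its $\C^*$-action and considers the equivariant invariant $\langle\psi^{a_1}\tilde\alpha_1,\dotsc\rangle^{\X}_{g,n,(i_{D_0})_*\ell}$. Virtual localization \emph{always} expresses this as a sum over decorated graphs: the single-vertex graph over $D_0$ contributes the $(\sO^-\oplus\sO(D)^+)$-twisted $D$-invariant, the single-vertex graph over $X_\infty$ contributes an $\sO^+$-twisted $X$-invariant, and every multi-vertex graph contributes products of twisted $D$- and $X$-invariants with \emph{strictly smaller} curve class or genus at each vertex, because the total class is distributed among the vertices. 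The bound $(\ell,D)>2g-2$ enters not through $H^1$-vanishing on the domain curve but because $\mathrm{vdim}_\X-\mathrm{vdim}_D=(\ell,D)+2-2g>0$ forces the $\X$-invariant itself to vanish by dimension (Lemma~\ref{lem:vanishing}), turning the localization identity into a relation that solves for the $D$-invariant. The leading $\lambda$-power then isolates the untwisted $D$-invariant from its twisted avatar. Your downstream finiteness and CY3 reductions (string/dilaton/divisor, ampleness bounding $\ell$, the quintic arithmetic $5d\le 2g-2\le 4\Rightarrow d=0$) are correct and match Corollaries~\ref{c:1.2} and~\ref{c:1.4}, but they rest on the recursion of Theorem~\ref{theorem:recursion}, whose source you have misidentified.
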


This can be considered as a weak form of \emph{quantum Lefschetz hyperplane theorem in higher genus}. Previous results are all in genus zero, cf.\ \cite{MR1719555, MR1839288, CG} and genus one \cite{Z}. 
The main technique in the proof of the above theorem is the virtual localization in an auxiliary space $\X$ (introduced in Section~\ref{s:2.2}).

\begin{rmk}
Let us point our a few take-aways from our method.
\begin{enumerate}
\item It confirms that, in principle, the traditional localization method of moduli of stable maps is powerful enough to effectively determine Gromov--Witten invariants of a hypersurface assuming the knowledge of low-degree invariants (finitely many degrees if the hypersurface is ample) for each genus.
\item The localization formula can be naturally reorganized according to Givental's technique (briefly discussed in section \ref{section:genf}).
\end{enumerate}
For example, since localization is also available in the symplectic geometric definition of Gromov--Witten theory, (a) indicates that we have similar relations in the symplectic setting. It might be possible that these results together with some deep symplectic geometric results (e.g., \cite{IP}) can give us more insights in the algebraic setting.
\end{rmk}



\subsection{Acknowledgement}
The results in this paper were first reported by the first author in a \emph{TIMS algebraic geometry seminar} and in the \emph{NCTS Mini-workshop on Algebraic Geometry} in Dec 2016 during our visit to Taipei. 
Thanks are also due to H.~Lho, F.~Janda, R.~Pandharipande, L.~Wu and Z.~Yang for their interest and stimulating discussions.
The first author is supported by SwissMAP and both authors are partially supported by the NSF during the course of this work.

\section{Statements of main results}

\subsection{Twisted Gromov--Witten invariants} \label{s:1.1}
We recall the definitions and set up the notations of the twisted Gromov--Witten invariants. 
Let $Y$ be a smooth projective variety, $E$ be a vector bundle over $Y$, and $\bM_{g,n}(Y, \beta)$ be the moduli stack of stable maps from genus $g$, $n$-pointed curves to $Y$ with class $\beta \in \operatorname{NE}(Y)$ in the Mori cone of curves.
Let 
\[
ft_{n+1}:\bM_{g,n+1}(Y,\beta) \rightarrow \bM_{g,n}(Y,\beta)
\]
be the map forgetting the last marked point. 
It gives rise to a universal family over $\bM_{g,n}(Y,\beta)$. 
Furthermore, the evaluation map of the last marked point
\[
ev_{n+1}:\bM_{g,n+1}(Y,\beta) \rightarrow Y
\]
serves as the universal stable map over $\bM_{g,n}(Y,\beta)$.

Given the setting, there is a $\C^*$-action on $E$ by scaling the fibers. 
Let $\lambda$ be the corresponding equivariant parameter. 
By projectivity of $Y$, there exists a two term complex of vector bundles over $\bM_{g,n}(Y,\beta)$
\[
0\rightarrow E_{g,n,\beta}^0 \rightarrow E_{g,n,\beta}^1\rightarrow 0
\]
representing the element $R(ft_{n+1})_*ev_{n+1}^*E \in D^b(\bM_{g,n}(Y,\beta))$.
Denote by $E_{g,n,\beta}$ the two term complex $[E_{g,n,\beta}^0 \rightarrow E_{g,n,\beta}^1]$, 
on which there is a $\C^*$-action induced from the action on $E$. 
It is easy to see that the equivariant Euler class of $E_{g,n,\beta}$
\[
e_{\C^*}(E_{g,n,\beta}) :=\displaystyle\frac{e_{\C^*}(E_{g,n,\beta}^0)}{e_{\C^*}(E_{g,n,\beta}^1)} \in H^*(\bM_{g,n}(X,\beta))\otimes_\C \C[\lambda,\lambda^{-1}] 
\]
is well defined.

The twisted Gromov--Witten invariants needed for this paper are of the following form:
\[
\langle \psi^{k_1}\alpha_1,\dotsc,\psi^{k_n}\alpha_n \rangle_{g,n,\beta}^{X, E} := \displaystyle\int_{[\bM_{g,n}(X,\beta)]^{vir}} \dfrac{1}{e_{\C^*}(E_{g,n,\beta})} \cup \prod\limits_{i=1}^n \psi_i^{k_i}ev_{i}^*\alpha_i.
\]
It's an element in $\C[\lambda,\lambda^{-1}]$.

\begin{conv}\label{conv:twisted}
We distinguish two types of the equivariant twisted invariants. When the $\C^*$ acts on the fibers on $E$ by ``positive'' scaling, sending a vector $v$ to $\lambda v$, we write $E^+$ for the equivariant bundle and use the notation .
$$\langle \dotsb \rangle_{g,n,\beta}^{X,E^+}$$ 
for the corresponding equivariant twisted invariant.
Similarly, $\langle \dotsb \rangle_{g,n,\beta}^{X,E^-}$ stands for invariants with inverse scaling action $ v\mapsto \lambda^{-1} v$. 
\end{conv}

\subsection{Quantum Lefschetz} \label{s:1.2}
Let $D\subset X$ be a smooth hypersurface in a smooth projective variety $X$. Let 
\[
vdim_D=vdim(\bM_{g,n}(D,\ell))=(1-g)(\dim (D)-3)+(\ell,-K_D)+n.
\]
In $N_1(D), N_1(X)$, we say $\ell \geq \ell'$ if $\ell-\ell'$ is effective.
Here we abuse the notation and use $\ell, \ell'$ as curve classes both in $D$ and in $X$ via pushforward.
By classical Lefschetz hyperplane theorem, when $\dim D >2$, $H_2 (D) = H_2(X)$ and not much information is lost by this identification. Recall the notation in Definition \ref{defn:nonprim}.

\begin{thm}\label{theorem:recursion}
Let $g\geq 0$, $\ell \in N_1(D)$ such that $2g-2<(\ell,D)$. Let $\alpha_1,\dotsc,\alpha_n\in H^*(D)_X$ be cohomology classes and $a_1,\dotsc,a_n\in \N$ be nonnegative integers such that $\sum\limits_{i=1}^n (a_i+deg(\alpha_i))=vdim_D$. There is a formula expressing the Gromov--Witten invariant
\begin{equation*} \label{e:gwi}
\langle \psi^{a_1}\alpha_1,\dotsc,\psi^{a_n}\alpha_n \rangle^D_{g,n,\ell}
\end{equation*}
in terms of the following.
\begin{itemize}
	\item Invariants of the form $\langle \psi^{k_1}\alpha_1',\dotsc,\psi^{k_{n'}}\alpha_{n'}' \rangle_{g',n', \ell'}^{D,\sO(D)^+\oplus\sO^-}$ where $g'\leq g$, $\ell' < \ell$,
	\item Invariants of the form $\langle \psi^{k_1}\alpha_1',\dotsc,\psi^{k_{n'}}\alpha_{n'}' \rangle_{g',n',\ell'}^{X,\sO(-D)^-}$ where $g'\leq g$ and $\ell' \leq \ell$.
	\item At most one invariant of the form $\langle \psi^{k_1}\alpha_1',\dotsc,\psi^{k_{n}}\alpha_{n}' \rangle_{g,n,\ell}^{X,\sO^+}$
\end{itemize}
\end{thm}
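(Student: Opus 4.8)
The plan is to compute a single $\C^*$-equivariant quantity by virtual localization on the auxiliary space $\X$ of Section~\ref{s:2.2} and to read the wanted invariant off as one of the fixed-point contributions. Concretely, I would take $\X$ to be a fiberwise compactification of the total space of $\sO(D)$ over $X$ --- a $\Pp^1$-bundle $\Pp(\sO\oplus\sO(D))\to X$ --- equipped with a fiberwise $\C^*$-action with equivariant parameter $\lambda$ and two fixed sections $X_0,X_\infty\cong X$; the section $s\in H^0(\sO(D))$ cutting out $D$ provides a further, non-equivariant copy $\Gamma_s\cong X$ meeting $X_0$ exactly along $D=\{s=0\}$. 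Fixing the data $(g,n,\ell)$ and choosing equivariant insertions pulled back from $X$, I would form a $\C^*$-equivariant integral over $\bM_{g,n}(\X,\tilde\ell)$ with a section class $\tilde\ell$ lifting $\ell$. By a direct evaluation this integral is identified with the single ambient invariant $\langle\dotsb\rangle^{X,\sO^+}_{g,n,\ell}$ permitted by the statement, so it plays the role of the global quantity to be expanded; this explains why at most one $\sO^+$-term can appear.

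Next I would expand the same integral by the virtual localization formula and enumerate the $\C^*$-fixed loci of $\bM_{g,n}(\X,\tilde\ell)$. Each fixed locus is a graph in which some components are mapped into the fixed sections $X_0,X_\infty$, contributing integrals over $\bM_{g',n'}(X,\ell')$, while the remaining components are contracted over the vanishing locus $D$ and carry the fiber degree, glued along nodes lying over $D$. The deformation theory of maps into a line-bundle total space produces normal-bundle Euler classes of the form $e_{\C^*}(R\pi_*ev^*\sO(\pm D))$; after matching the $\lambda$-weights, the section components assemble into the local invariants $\langle\dotsb\rangle^{X,\sO(-D)^-}$, while the components over $D$ assemble into $\langle\dotsb\rangle^{D,\sO(D)^+\oplus\sO^-}$, the extra $\sO^-$ factor recording the trivial fiber direction transverse to $D$. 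One distinguished fixed locus consists of maps whose entire image lies in $\Gamma_s$ and factors through $D$; by the functoriality of virtual classes of \cites{Kon,KKP} its contribution reduces, up to an invertible monomial in $\lambda$, to the untwisted invariant $\langle\psi^{a_1}\alpha_1,\dotsc,\psi^{a_n}\alpha_n\rangle^D_{g,n,\ell}$ that we are after.

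The decisive step is to show the resulting identity is triangular in the curve class, so that the distinguished term can be solved for. This is where the hypothesis $2g-2<(\ell,D)$ enters: a dimension count in the fiber directions, governed precisely by this inequality, should force the distinguished locus to carry the entire class $\ell$ on a single genus-$g$ component over $D$ with invertible coefficient, while every competing fixed locus either lowers the genus, or splits off fiber degree and thereby lowers the $D$-component class to some $\ell'<\ell$, or moves all of the degree onto the $X$-sections. In particular the inequality is exactly what tames the failure of naive functoriality in higher genus noted in the introduction: the excess contributions are precisely the strictly-lower-degree twisted invariants $\langle\dotsb\rangle^{D,\sO(D)^+\oplus\sO^-}_{g',n',\ell'}$ with $\ell'<\ell$. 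I expect the main obstacle to be exactly this bookkeeping --- controlling the higher-genus fixed loci, whose dual graphs distribute genus and fiber degree arbitrarily across many nodes over $D$, and proving that their contributions collapse into only the three advertised families with the stated bounds on $(g',\ell')$. Once triangularity is in hand, solving the localization identity for the top term, and extracting the appropriate coefficient in $\lambda$ to pass from the equivariant identity to the numerical one, completes the proof.
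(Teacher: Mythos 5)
Your proposal breaks down at the choice of master space, and the failure is not repairable within your setup. The $\Pp^1$-bundle $\Pp_X(\sO\oplus\sO(D))\to X$ with the fiberwise $\C^*$-action has exactly two fixed loci, the zero and infinity sections, both isomorphic to $X$; there is \emph{no} fixed locus isomorphic to $D$. Consequently every localization contribution on your space is an integral over some $\bM_{g',n'}(X,\ell')$ twisted by $\sO(\pm D)$, and the resulting identity relates twisted invariants of $X$ to one another --- the untwisted invariant $\langle\psi^{a_1}\alpha_1,\dotsc,\psi^{a_n}\alpha_n\rangle^D_{g,n,\ell}$ never appears on either side. Your attempted rescue via the graph $\Gamma_s$ of the defining section cannot close this gap: $\Gamma_s$ is not $\C^*$-invariant (the fiberwise scaling moves it, fixing it only along $D\subset X_0$), so stable maps with image in $\Gamma_s$ do not form a fixed locus and receive no separate localization residue; and the appeal to the functoriality of \cites{Kon,KKP} to convert a contribution of maps into $X$ with class $(i_D)_*\ell$ into an honest $D$-invariant is precisely the na\"ive higher-genus quantum Lefschetz that the introduction points out is false (e.g.\ a quartic curve in a quartic surface) --- that functoriality is a genus-zero statement, and evading it is the whole point of the theorem.

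The paper's proof uses a genuinely different auxiliary space: the compactified deformation to the normal cone $\X=Bl_{D\times\{0\}}(X\times\Pp^1)$ of Section~\ref{s:2.2}, fibered over $\Pp^1$ rather than over $X$. Its central fiber is $X\cup_D\Pp_D(\sO\oplus\sO(D))$, and the induced $\C^*$-action has three fixed loci, $D_0\cong D$, $X_0\cong X$, $X_\infty\cong X$; it is the fixed component $D_0$, absent from your space, that produces moduli of maps to $D$, twisted by its normal bundle $N_{D_0/\X}=\sO_D(D)\oplus\sO_D$ of weights $+1,-1$, whence the $\sO(D)^+\oplus\sO^-$ twist in the statement. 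The logic of your first paragraph is also backwards relative to the paper: the global quantity $\langle\psi^{a_1}\tilde\alpha_1,\dotsc,\psi^{a_n}\tilde\alpha_n\rangle^{\X}_{g,n,(i_{D_0})_*\ell}$ is not ``identified with'' the $\sO^+$-twisted ambient invariant; it \emph{vanishes}, and this is exactly where the hypothesis $2g-2<(\ell,D)$ enters (Lemma~\ref{lem:vanishing}), since $vdim_\X=vdim_D+2(1-g)+(\ell,D)$ strictly exceeds the total degree $vdim_D$ of the insertions. The invariant $\langle\dotsb\rangle^{X,\sO^+}_{g,n,\ell}$ then arises as the contribution of the one-vertex graph over $X_\infty$ in \eqref{eqn:loc}, which is why at most one such term occurs. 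Finally, extracting the untwisted $D$-invariant is not a triangularity argument in the curve class but a second dimension count at the $D_0$ vertex: because the insertions already saturate $vdim_D$, only the leading $\lambda$-term of $1/e_{\C^*}\bigl((\sO\oplus\sO(D))_{g,n,\ell}\bigr)$ contributes, giving $\langle\dotsb\rangle^{D,\sO^-\oplus\sO(D)^+}_{g,n,\ell}=(-1)^{1-g}\lambda^{2g-2-(\ell,D)}\langle\dotsb\rangle^D_{g,n,\ell}$, which one solves for against the $X_\infty$ term and the multi-vertex graphs of lower genus and degree.
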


The proof of this theorem will occupy the next section. We first explicate some consequences here. Recall we adopt the Convention \ref{conv:nonprim}.

\begin{cor} \label{c:1.2}
When $D$ is an ample hypersurface in $X$, the (descendant) Gromov--Witten invariants of $D$ for a fixed genus $g_0$ (of any degree) are determined by GW invariants of $X$ and a \emph{finite number} of GW invariants of $D$ with $g \le g_0$ and $(\ell, D) \leq 2g -2$.
\end{cor}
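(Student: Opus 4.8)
The plan is to derive the statement from Theorem~\ref{theorem:recursion} by Noetherian induction on the curve class $\ell$, combined with the standard reduction of equivariantly twisted invariants to honest descendant invariants of the same target. Because $D$ is ample, the intersection number $(\ell',D)$ is a positive integer on every nonzero effective class, so it strictly decreases along any chain $\ell>\ell_1>\ell_2>\cdots$ in $\NE(D)$; this makes the order well-founded and, at the same time, shows that $\{\ell'\in\NE(D):(\ell',D)\le C\}$ is finite for every $C$. I would prove the slightly stronger assertion that \emph{all} genus $g\le g_0$ descendant invariants of $D$ with insertions in $H^*(D)_X$ and class $\le\ell$ are determined by the GW theory of $X$ together with the base data in \eqref{e:degbound}, and then let $\ell$ range over $\NE(D)$.

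For the inductive step fix $g\le g_0$ and $\ell$. If $(\ell,D)\le 2g-2$ the relevant invariants belong to the declared finite input and there is nothing to do. Otherwise $2g-2<(\ell,D)$, so Theorem~\ref{theorem:recursion} applies (invariants violating the dimension constraint $\sum_i(a_i+\deg\alpha_i)=vdim_D$ vanish, so we may assume it) and expresses $\langle\psi^{a_1}\alpha_1,\dots,\psi^{a_n}\alpha_n\rangle^D_{g,n,\ell}$ through three kinds of terms. The two families of $X$-twisted invariants, $\langle\cdots\rangle^{X,\sO(-D)^-}$ and the single $\langle\cdots\rangle^{X,\sO^+}$, are invariants of the ambient space; by the reduction in the next paragraph they are computed from GW invariants of $X$, which are given. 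The $D$-twisted invariants $\langle\cdots\rangle^{D,\sO(D)^+\oplus\sO^-}$ all carry class $\ell'<\ell$ and genus $g'\le g\le g_0$; the same reduction rewrites them through untwisted invariants of $D$ of class $\le\ell'<\ell$ and genus $\le g_0$, which are known by the induction hypothesis. This closes the induction.

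The remaining ingredient is the passage from twisted to untwisted invariants. Each bundle occurring is a line bundle $E$ pulled back from $X$ or $D$, so $e_{\C^*}(E_{g,n,\beta})^{-1}$ expands, after inverting $\lambda$, into a Laurent series in $\lambda$ whose coefficients are universal polynomials in the Chern classes of $R(ft_{n+1})_*ev_{n+1}^*E$. Rewriting these via Grothendieck--Riemann--Roch in the style of Mumford produces $\psi$- and $\kappa$-classes plus boundary contributions; integrating against $[\bM_{g,n}(Y,\beta)]^{vir}$ and resolving the boundary by the splitting axiom expresses every coefficient as a finite $\Q$-linear combination of descendant GW invariants of the same target $Y$ of genus $\le g$ and class $\le\beta$ (a node can only split $\beta=\beta_1+\beta_2$ into effective pieces). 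Equivalently one may invoke the quantum Riemann--Roch theorem. For $Y=X$ this yields GW invariants of $X$, and for $Y=D$ of class $\ell'<\ell$ it yields GW invariants of $D$ of class $<\ell$, exactly as required above.

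The step I expect to be the genuine obstacle is the finiteness of the base data rather than the recursion. Ampleness already bounds the number of classes with $(\ell,D)\le 2g-2$, but for each such class the dimension constraint permits arbitrarily many marked points, e.g.\ arbitrarily many insertions $\psi^0\cdot 1$. One must therefore use the string, dilaton and divisor equations, which stay within a fixed $(g,\ell)$, to remove every insertion of the form $\psi^0\cdot 1$, $\psi^1\cdot 1$, or $\psi^0\cdot(\text{divisor})$; the surviving insertions satisfy $a_i+\deg\alpha_i\ge 2$, whence $2n\le\sum_i(a_i+\deg\alpha_i)=vdim_D$ bounds $n$ and leaves only finitely many invariants per class. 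The two points needing care are that the divisor equation keeps insertions inside the subring $H^*(D)_X$ (it does, as $H^*(D)_X$ is the image of a ring map), and that the boundary splitting in the Riemann--Roch step does not introduce cohomology classes outside $H^*(D)_X$; controlling the latter is what ties the argument to Convention~\ref{conv:nonprim} and is the delicate bookkeeping behind the clean statement.
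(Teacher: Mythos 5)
Your proposal is correct and follows essentially the same route as the paper's proof: Theorem~\ref{theorem:recursion} drives a descending recursion on $(g,\ell)$, the quantum Riemann--Roch theorem of \cite{CG} converts the twisted invariants into untwisted ones of the same or lower inductive order, ampleness of $D$ makes the order well-founded and bounds the set of classes with $(\ell,D)\le 2g-2$, and the string, dilaton and divisor equations remove degree-one insertions to yield finiteness. You merely formalize the induction more explicitly and flag two subtleties (stability of $H^*(D)_X$ under the divisor equation and under the splitting in the Riemann--Roch step) that the paper's shorter proof leaves implicit.
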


\begin{proof}
By Theorem~\ref{theorem:recursion}, if $(\ell, D) > 2g-2$ it can be expressed in terms of the twisted invariants on $X$ and the twisted invariants on $D$ of lower inductive order.
By the quantum Riemann--Roch theorem in \cite{CG}, the twisted invariants can be expressed in terms of ordinary invariants of the same of lower inductive order.
At the end of the recursive process, the genus and degree bounds will be reached.
Note that any insertion of $1$ can be removed by the string equation, and we assume henceforth no such insertion.
Now, given $g$ and $\ell$, the virtual dimension $vdim_D -n$ above is fixed. 
Therefore, there are only finite number of choices of insertions for a given $n$.
Indeed, when $n >\!> 0$, most of the insertions must be of $\deg_{\mathbb{C}} =1$, which can be removed by divisor and dilaton equations.
Therefore, only a finite number of GW invariants are needed.
\end{proof}

This can be considered as a weak form of \emph{quantum Lefschetz hyperplane theorem in higher genus}. 

When $D$ is a Calabi--Yau threefold, the relation between twisted and untwisted invariants in $D$ is especially simple.

\begin{lem} \label{l:twistedQ}
Suppose $D$ is a Calabi--Yau threefold, the twisted invariants 
\[
\langle \psi^{a_1}\sigma_1,\dotsc,\psi^{a_n}\sigma_n \rangle_{g,n,\ell}^{D,\sO^- \oplus \sO(D)^+}
\]
can be determined by the invariant $\langle \, \rangle^{D}_{g,0,\ell}$ 
\end{lem}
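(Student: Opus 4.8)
The plan is to prove the statement in two steps: first convert the twisted invariants into ordinary (untwisted) descendant invariants of $D$ by quantum Riemann--Roch, and then exploit the Calabi--Yau condition to collapse every ordinary ambient descendant invariant to the basic unpointed invariants $\langle\,\rangle^D_{g',0,\ell'}$ with $g'\le g$ and $\ell'\le\ell$.

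\emph{Step 1 (untwisting).} The twist is $E=\sO^-\oplus\sO(D)^+$, so $\tfrac{1}{e_{\C^*}(E_{g,n,\ell})}$ factors as $\tfrac{1}{e_{\C^*}(\sO^-_{g,n,\ell})}\cdot\tfrac{1}{e_{\C^*}(\sO(D)^+_{g,n,\ell})}$. Since $\sO$ is trivial, $R(ft_{n+1})_*ev_{n+1}^*\sO=\sO\ominus\mathbb{E}^\vee$ in $K$-theory ($\mathbb{E}$ the Hodge bundle), and a direct computation shows $\tfrac{1}{e_{\C^*}(\sO^-_{g,n,\ell})}$ equals, up to an overall sign, $\lambda^{-1}\sum_{j=0}^{g}\lambda^{g-j}c_j(\mathbb{E})$, a Laurent polynomial in $\lambda$ whose coefficients are Hodge classes; this factor is harmless, since Hodge classes pull back under $ft_{n+1}$ and behave as spectators. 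The genuine difficulty is the factor $\tfrac{1}{e_{\C^*}(\sO(D)^+_{g,n,\ell})}$, the inverse equivariant Euler class of $R(ft_{n+1})_*ev_{n+1}^*\sO(D)$, whose Chern classes are tautological but depend on the map, so that the naive string/dilaton/divisor equations acquire correction terms. This is exactly the situation governed by the Coates--Givental quantum Riemann--Roch theorem \cite{CG}, which I would invoke, as in the proof of Corollary~\ref{c:1.2}, to express the twisted invariant as a universal expression in the \emph{ordinary} descendant invariants of $D$. Because the relevant characteristic classes are built from $c_1(\sO(D))=D|_D\in H^*(D)_X$, the resulting ordinary invariants again carry only insertions from $H^*(D)_X$; because the associated loop-group/graph-sum action partitions genus and curve class among vertices, they have genus $g'\le g$ and class $\ell'\le\ell$.

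\emph{Step 2 (Calabi--Yau collapse).} Here the input is an ordinary ambient descendant invariant $\langle\prod_i\psi^{a_i}\sigma_i\rangle^D_{g',n,\ell'}$ with $\sigma_i\in H^*(D)_X$. Since $D$ is Calabi--Yau of dimension $3$, we have $-K_D=0$ and hence $\mathrm{vdim}\,\bM_{g',n}(D,\ell')=n$, so the invariant vanishes unless $\sum_i(a_i+\deg_{\C}\sigma_i)=n$. The crucial observation is that with $n$ insertions summing to $n$, they cannot all contribute $\ge 2$; hence at least one insertion contributes $\le 1$ and is therefore of the form $\psi^0\cdot 1$, $\psi^1\cdot 1$, or $\psi^0\cdot\sigma$ with $\sigma\in im(H^2(X)\to H^2(D))$. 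Each of these is removable by the string, dilaton, or divisor equation respectively, all of which preserve $(g',\ell')$, keep the insertions ambient, and lower $n$ by one. Inducting on $n$, every such invariant is reduced to a computable multiple of $\langle\,\rangle^D_{g',0,\ell'}$, which proves the claim.

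The main obstacle is Step 1: one must verify that quantum Riemann--Roch genuinely applies to the equivariant inverse-Euler twist at hand (as an identity in $\C[\lambda,\lambda^{-1}]$), and, more importantly, keep careful track that its output consists only of ambient untwisted invariants of genus $\le g$ and class $\le\ell$, so that the induction of Step 2 has the right shape and is not fed non-ambient insertions. By contrast, Step 2 is elementary once the dimension bookkeeping is in place, and it is precisely the Calabi--Yau identity $\mathrm{vdim}=n$ that forces the existence of a removable insertion; this is the feature that fails for a general ample hypersurface and explains why the clean collapse onto $\langle\,\rangle^D_{g',0,\ell'}$ is special to the Calabi--Yau threefold case.
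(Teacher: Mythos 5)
Your proposal is considerably more roundabout than the paper's argument, and the detour through quantum Riemann--Roch is exactly where it breaks. The paper never untwists: it observes that the twisting class pulls back under the forgetful map, $E_{g,n+1,\ell}\cong ft_{n+1}^*E_{g,n,\ell}$ for $E=\sO\oplus\sO(D)$, so the string, dilaton and divisor equations hold \emph{verbatim in the twisted theory}. Since $\operatorname{vdim}\bM_{g,n}(D,\ell)=n$, your own degree-counting observation (at least one insertion has $a_i+\deg_{\C}\sigma_i\le 1$, hence is removable) applies directly to the twisted invariants and collapses them to $C_{g,n,\ell}(a,\sigma)\,\langle\,\rangle^{D,\sO^-\oplus\sO(D)^+}_{g,0,\ell}$. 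The $0$-pointed twisted invariant is then evaluated in one stroke: the virtual class is already $0$-dimensional, so in the expansion $\bigl(e_{\C^*}((\sO\oplus\sO(D))_{g,0,\ell})\bigr)^{-1}=(-1)^{1-g}\lambda^{2g-2-(\ell,D)}(1+O(\lambda^{-1}))$ only the leading constant term can pair nontrivially, giving $\langle\,\rangle^{D,\sO^-\oplus\sO(D)^+}_{g,0,\ell}=(-1)^{1-g}\lambda^{2g-2-(\ell,D)}\langle\,\rangle^D_{g,0,\ell}$. No QRR is needed, and one obtains exact proportionality to the \emph{single} invariant $\langle\,\rangle^D_{g,0,\ell}$, which is what the lemma asserts and what Section~\ref{s:1.3} makes explicit.

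The gap in your Step 1 is the point you flag but do not resolve: the quantized Coates--Givental operator $\hat\Delta$ is not merely multiplication by ambient characteristic classes. Its quadratic (propagator) terms contract against the Poincar\'e pairing via a sum over a \emph{full} dual basis of $H^*(D)$, so the untwisted invariants it produces acquire insertions from all of $H^*(D)$ --- for a quintic threefold this includes the large primitive part of $H^3(D)$, which is not in $H^*(D)_X$. Such insertions are odd and cannot be removed by string/dilaton/divisor, so your Step 2 induction cannot even start on those terms; one would need a separate cancellation or vanishing argument that you do not supply. Moreover, even if this were repaired, your pipeline would only show determination by the whole collection $\{\langle\,\rangle^D_{g',0,\ell'}:g'\le g,\ \ell'\le\ell\}$ (with the many lower-order QRR terms conspiring to cancel), which is strictly weaker than the lemma's conclusion $\langle\psi^{a_1}\sigma_1,\dotsc,\psi^{a_n}\sigma_n\rangle^{D,\sO^-\oplus\sO(D)^+}_{g,n,\ell}=C_{g,n,\ell}(a,\sigma)\,(-1)^{1-g}\lambda^{2g-2-(\ell,D)}\langle\,\rangle^D_{g,0,\ell}$. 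The single missing idea is that the forgetful-map compatibility of the twist lets you run your (correct) Calabi--Yau collapse \emph{before} any untwisting, after which the $\lambda$-leading-term argument on the $0$-pointed space finishes the proof.
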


\begin{proof}
Since the virtual dimension of $\bM_{g,n}(D,\ell)$ is $n$, it follows from the string equation, dilaton equation, divisor equation
and the projection formula that
\[
\langle \psi^{a_1}\sigma_1,\dotsc,\psi^{a_n}\sigma_n \rangle_{g,n,\ell}^{D,\sO^-\oplus\sO(D)^+} = C_{g,n,\ell} (a, \sigma)
\langle \, \rangle_{g,0,\ell}^{D,\sO^-\oplus\sO(D)^+},
\]
where $C_{g,n,\ell} (a, \sigma)$ is a constant depending on $g, n, \ell$, $a_i$ and $\sigma_i$ via the above equations. The explicit formula is given in Section~\ref{s:1.3}.

It is not difficult to see that 
\[
 \left( e_{\C^*}((\sO\oplus\sO(D))_{g,0,\ell}) \right)^{-1} =(-1)^{1-g}\lambda^{2g-2-(\ell, D)}(1+O(\lambda^{-1})).
\]
Since $\text{dim}([\bM_{g,0}(D, \ell)]^{vir})=0$ already, the only summand that contributes nontrivially to the integral is the leading term $(-1)^{1-g}\lambda^{2g-2-(\ell, D)}$. As a result,
\begin{equation}\label{eqn:twQ}
\langle \, \rangle_{g,0, \ell}^{D,\sO^-\oplus\sO(D)^+} = (-1)^{1-g} \langle \, \rangle_{g,0,\ell}^D \lambda^{2g-2-(\ell, D)}.
\end{equation}
This completes the proof.
\end{proof}

Note that the constant $C_{g,n,\ell}$ can be made explicit. We present it in the Section~\ref{s:1.3}.
In this (CY3) case, Corollary~\ref{c:1.2} has an especially simple form.

\begin{cor} \label{c:1.4}
When $D$ is an ample CY3 hypersurface in $X$, all descendant Gromov--Witten invariants of fixed $(g, n, \ell)$ with $2g-2 < (\ell, D)$ in $D$ can be expressed in terms of GW invariants of $X$ and a finite number of GW invariants of $D$
\[
 \langle \, \rangle^D_{g', 0, \ell'}, \quad \text{for} \,  g' \leq g, \, \ell' < \ell, \text{and} \, (\ell', D) \leq 2g' -2.
\]
In particular, when $D =Q$ is the quintic threefold in $\Pp^4$, and when $g\leq 3$, the GW invariants of $D$ are completely determined.
\end{cor}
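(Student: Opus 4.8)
The plan is to iterate Theorem~\ref{theorem:recursion}, using Lemma~\ref{l:twistedQ} to collapse the twisted $D$-invariants it produces into untwisted $0$-pointed invariants $\langle\,\rangle^D_{g',0,\ell'}$, and then to run the recursion downward on the curve class $\ell$. First I would fix a descendant invariant $\langle \psi^{a_1}\alpha_1,\dotsc,\psi^{a_n}\alpha_n\rangle^D_{g,n,\ell}$ with $\alpha_i\in H^*(D)_X$ and $2g-2<(\ell,D)$ and apply Theorem~\ref{theorem:recursion}. This expresses it through (i) the twisted invariants $\langle\cdots\rangle^{X,\sO(-D)^-}_{g',n',\ell'}$ and at most one $\langle\cdots\rangle^{X,\sO^+}_{g,n,\ell}$ on the ambient space, and (ii) the twisted invariants $\langle\cdots\rangle^{D,\sO(D)^+\oplus\sO^-}_{g',n',\ell'}$ on $D$ with $g'\le g$ and $\ell'<\ell$. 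Exactly as in the proof of Corollary~\ref{c:1.2}, the invariants in (i) are rewritten as ordinary Gromov--Witten invariants of $X$ by the quantum Riemann--Roch theorem of \cite{CG}, and are therefore subsumed under ``GW invariants of $X$.''

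For the invariants in (ii) I would apply Lemma~\ref{l:twistedQ}: because $D$ is a CY3, each such invariant equals an explicit constant (the $C_{g',n',\ell'}$ of that lemma times the power of $\lambda$ in \eqref{eqn:twQ}) multiplied by $\langle\,\rangle^D_{g',0,\ell'}$. Hence the only genuinely new unknowns after one pass are the untwisted $0$-pointed invariants $\langle\,\rangle^D_{g',0,\ell'}$ with $g'\le g$ and $\ell'<\ell$. For each of these I split into two cases. If $(\ell',D)\le 2g'-2$, the invariant is already one of the permitted base terms and I retain it. If instead $2g'-2<(\ell',D)$, then, since the empty insertion lies in $H^*(D)_X$ and $vdim_D=0$ for a CY3 with $n'=0$, the hypotheses of Theorem~\ref{theorem:recursion} are again met and I apply it to $\langle\,\rangle^D_{g',0,\ell'}$, producing $X$-invariants and further $D$-invariants of class strictly below $\ell'$, which Lemma~\ref{l:twistedQ} again collapses to untwisted $0$-pointed invariants.

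The one point needing care, and the main (if modest) obstacle, is termination. Each recursive call strictly lowers the curve class, and the set $\{\ell'' : 0\le\ell''\le\ell'\}$ of effective classes below a fixed class is finite, since their degrees against a fixed ample class are bounded and they lie in the lattice $N_1(D)$. Thus there is no infinite strictly descending chain of classes and the recursion tree is finite; moreover the genus never increases, so every node stays in the range $g'\le g$. Reading off the leaves of this finite tree yields the desired expression in terms of GW invariants of $X$ together with the finitely many base invariants $\langle\,\rangle^D_{g',0,\ell'}$ with $g'\le g$, $\ell'<\ell$, and $(\ell',D)\le 2g'-2$.

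Finally I would specialize to the quintic $Q\subset\Pp^4$. Writing $d'$ for the degree of the class $\ell'$, one has $(\ell',D)=5d'$, so for $g'\le 3$ the base-case inequality $5d'\le 2g'-2\le 4$ forces $d'=0$; in particular there are no genus-$0$ base terms at all. The surviving base invariants are therefore the degree-$0$ constant-map invariants $\langle\,\rangle^D_{g',0,0}$ for $g'=1,2,3$, which are the well-known degree-$0$ contributions expressible through the Hodge integrals $\int_{\bM_{g'}}\lambda_{g'-1}^3$ computed by Faber--Pandharipande (weighted by $\chi(Q)$). Hence every Gromov--Witten invariant of the quintic threefold in genus at most $3$ is completely determined by the Gromov--Witten invariants of $\Pp^4$.
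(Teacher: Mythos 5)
Your proposal is correct and follows essentially the same route as the paper: it combines Theorem~\ref{theorem:recursion} with Lemma~\ref{l:twistedQ} to collapse the twisted $D$-terms into untwisted $0$-pointed invariants, descends on the curve class, and for the quintic observes that $(\ell',D)=5d'\geq 5>2g'-2$ for $g'\leq 3$ forces all base cases down to degree $0$, exactly as in the paper's (much terser) proof. You in fact make explicit two points the paper leaves implicit---termination via finiteness of effective classes below a fixed $\ell$, and the evaluation of the degree-$0$ base invariants by Hodge integrals---while omitting only the paper's closing remark that the invariants of $\Pp^4$ are themselves completely determined (Givental's higher-genus results together with the $J$-function and genus-zero reconstruction).
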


\begin{proof}
The first part follows directly from Lemma~\ref{l:twistedQ}.
For the quintic case, $(\ell, D) \ge 5$ for $\ell \ne 0$.
and hence $2g-2 < (\ell, D)$ for $g \le 3$.
Therefore, the GW invariants of $D$ are completely determined by those of $\Pp^4$,
which in turn are completely determined.
For example, by results of Givental in \cite{MR1901075}, all higher genus invariants of $\Pp^4$ are determined by genus zero invariants.
In genus zero, the $J$-function of $\Pp^4$ are known in \cite{Giv4} and a reconstruction theorem in \cite{LP} completely determines all genus zero (decendant) invariants from the $J$-function.
\end{proof}

\begin{rmk}
Corollaries~\ref{c:1.2} and \ref{c:1.4} are reminiscent of the holomorphic anamoly conjecture of Bershadsky--Cecotti--Ooguri--Vafa \cites{BCOV1, BCOV2}.
The first step towards linking these two will be a generating function formulation alluded in Remark~\ref{r:0.2}(b)  
Indeed, the recent remarkable result by Guo, Janda and Ruan \cite{GJR}, posted while this paper was in preparation, accomplished genus two case via a different method.
\end{rmk}

The main ingredient in formulating and proving Theorem~\ref{theorem:recursion} is the virtual localization on the moduli of stable maps to an auxiliary $(\dim (X)+1)$-dimensional space $\X$, defined in Section~\ref{s:2.2}. 

\subsection{Multiple point functions of Calabi--Yau threefolds} \label{s:1.3}
Now let $D$ be any Calabi--Yau threefold.
As mentioned above, the virtual dimension of $\bM_{g,n}(D,\ell)$ is $n$.
It follows that the cohomology class insertions must be divisors.
In this case, there is a closed formula for the multiple point function in terms of $0$-pointed invariants. 
Let $H$ be any divisor in $D$.
(In the context of GW theory, one often has $H= \sum_i t^i h_i$, where $\{ h_i \}$ is a basis of $H^2(D)$ and $\{ t^i \}$ the dual coordinates.)

\begin{prop}\label{prop:Qvertex}
Let $D$ be a Calabi--Yau threefold.
Fix integers $N, n,m\geq 0$ such that $N\geq n+m$. Let 
\[
\begin{split}
&\sum\limits_{\substack { k_1,\dotsc,k_m, \\ l_1,\dotsc,l_n } } \langle H\psi^{k_1},\dotsc,H\psi^{k_m},\psi^{l_1},\dotsc,\psi^{l_n},1,\dotsc,1 \rangle_{g,N,d\ell}^D \prod_{i=1}^m x_i^{k_i} \prod_{j=1}^n y_j^{l_j} \\
= &\sum\limits_{p=0}^n \left( \prod\limits_{r=1}^p (2g+ m+r-4) \right) \sigma_p(y_1,\dotsc,y_n) (\sum\limits_{i=1}^m x_i + \sum\limits_{j=1}^n y_j)^{N-m-p} \left( \int_{\ell} H \right)^m N_{g,\ell}
\end{split}
\]
where
\[
\sigma_p(y_1,\dotsc,y_n)=\sum\limits_{1\leq j_1< \dotsb < j_p\leq n} y_{j_1}\dotsb y_{j_p}
\]
is the $p$-th elementary symmetric polynomial, $\int_{\ell} H = (\ell, H)$ the pairing, and 
$$N_{g,\ell} :=\langle \, \rangle_{g,0,\ell}^D.$$
\end{prop}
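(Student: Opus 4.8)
The plan is to reduce every invariant to the $0$-pointed invariant $N_{g,\ell}$ using only the string, dilaton and divisor equations, and to recognise the stated closed form as the \emph{unique} polynomial compatible with them. Throughout write $h=(\ell,H)$, $s=\sum_{i}x_i+\sum_{j}y_j$ and $c_p=\prod_{r=1}^{p}(2g+m+r-4)$, so that the claimed right-hand side is $h^{m}N_{g,\ell}\sum_{p=0}^{n}c_p\,\sigma_p(y)\,s^{\,n-p}$.

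First I would dispose of the plain $1$-insertions. A $\psi^{0}\cdot 1$ insertion is exactly the $y=0$ specialisation of a descendant insertion $\sum_{l}\psi^{l}y^{l}$, and both $\sigma_p$ and $s$ behave correctly under setting a variable to zero (in particular $\sigma_p(y_1,\dots,y_n,0,\dots,0)=\sigma_p(y_1,\dots,y_n)$). Hence it suffices to prove, for all $m$ and all $n$, the ``all variables present'' identity
\[
G_{m,n}(x,y):=\sum_{k_1,\dots,k_m,\,l_1,\dots,l_n}\langle H\psi^{k_1},\dots,H\psi^{k_m},\psi^{l_1},\dots,\psi^{l_n}\rangle^{D}_{g,m+n,\ell}\prod_{i} x_i^{k_i}\prod_{j} y_j^{l_j}=h^{m}N_{g,\ell}\sum_{p=0}^{n}c_p\,\sigma_p(y)\,s^{\,n-p};
\]
the Proposition is recovered by applying this with $N-m$ fundamental points and setting $N-m-n$ of those variables to zero (turning them into plain $1$'s). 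Since $\dim[\bM_{g,m+n}(D,\ell)]^{vir}=m+n$, the dimension constraint forces $\sum_i k_i+\sum_j l_j=n$, so $G_{m,n}$ is homogeneous of degree $n$ in $(x,y)$ and symmetric in $y_1,\dots,y_n$; the right-hand side visibly shares both properties.

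I would prove the identity by induction on $n$ (with $m$ fixed). For $n=0$ the constraint forces every $k_i=0$, and $m$ successive applications of the divisor equation give $G_{m,0}=h^{m}N_{g,\ell}$, the correction terms vanishing because they would lower a $\psi^{0}$-insertion. For the inductive step I isolate the last fundamental point: its $\psi^{0}$-part is governed by the string equation, which multiplies the remaining generating function by the sum of the other variables, and its $\psi^{1}$-part by the dilaton equation, so that
\[
G_{m,n+1}\big|_{y_{n+1}=0}=s\,G_{m,n},\qquad \big[y_{n+1}^{1}\big]G_{m,n+1}=(2g-2+m+n)\,G_{m,n}.
\]
One then checks that the conjectured form $R_{m,n}:=h^{m}N_{g,\ell}\sum_{p=0}^{n} c_p\sigma_p(y)s^{n-p}$ obeys both relations: the first follows at once from $s_+:=s+y_{n+1}\mapsto s$ and the vanishing of $\sigma_{n+1}(y_1,\dots,y_n)$, while the second, after expanding $\sigma_p(y_1,\dots,y_{n+1})=\sigma_p(y)+y_{n+1}\sigma_{p-1}(y)$ and $s_+=s+y_{n+1}$ and extracting the coefficient of $y_{n+1}$, reduces to the termwise identity $(n+1-p)c_p+c_{p+1}=(2g-2+m+n)c_p$, which is nothing but the defining recursion $c_{p+1}=(2g+m+p-3)c_p$.

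The step where one expects difficulty is the higher descendants $\psi^{\ge 2}\cdot 1$, for which no single string/dilaton/divisor move applies; the device that bypasses them is the elementary observation that a polynomial $P(x,y_1,\dots,y_{n+1})$, symmetric in the $y$'s and homogeneous of degree $n+1$, is \emph{determined} by $P|_{y_{n+1}=0}$ and $\partial_{y_{n+1}}P|_{y_{n+1}=0}$. Indeed, if both vanish then $y_{n+1}\mid P$, whence by symmetry $\prod_{j=1}^{n+1}y_j\mid P$; as $\deg P=n+1$ this forces $P=c\prod_j y_j$ for a scalar $c$, and the derivative condition gives $c=0$. Applying this to $P=G_{m,n+1}-R_{m,n+1}$ — which vanishes together with its $y_{n+1}$-derivative at $y_{n+1}=0$, by the inductive hypothesis and the two displayed relations — gives $G_{m,n+1}=R_{m,n+1}$ and closes the induction. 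The only real labour is thus the bookkeeping recursion $c_{p+1}=(2g+m+p-3)c_p$ and the verification that the $1$-insertions are a faithful specialisation; the higher descendants never have to be reduced explicitly.
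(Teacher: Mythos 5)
Your proposal is correct and takes essentially the same route as the paper, whose entire proof is the one-line remark that both sides satisfy the string, dilaton and divisor equations, which determine the function uniquely up to $N_{g,\ell}$. Your induction on $n$ — verifying the recursion $c_{p+1}=(2g+m+p-3)c_p$ and invoking the observation that a degree-$(n+1)$ polynomial symmetric in the $y_j$ is determined by its value and first derivative at $y_{n+1}=0$ — is simply a rigorous fleshing-out of that ``one can check'' uniqueness claim.
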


\begin{proof}
One can check that both sides satisfy the string, dilaton and divisor equations, which then uniquely determines the function up to $N_{g,\ell}$.
\end{proof}

The above formula can be used to organize invariants of the following form
\[
\langle \dfrac{A+BH}{w_1-\psi}, \dotsc, \dfrac{A+BH}{w_n-\psi} \rangle_{g,n,\ell}
\]
which occur frequently in the virtual localization.

\begin{cor}\label{cor:Qvertex}
\[
\begin{split}
  &\langle \, \frac{t_0 1+ t_1 H}{1 - z_1\psi}, \dotsc, \frac{t_0 1+t_1 H}{1- z_n \psi} \, \rangle^D_{g,n,d} \\
= &N_{g,d} \sum_{p+q+m=n} t_0^{p+q} \left(\int_{\ell} H \right)^m t_1^m \frac{(2g+m+p-4)!}{(2g+m-4)!} \, \cdot \,   
  \frac{(m+q)!}{m! q!} \sigma_p(z_1,\dotsc,z_n) \left( \sum_{i=1}^n z_i \right)^q ,
\end{split}
\]
where $m, p, q$ are nonnegative integers in the above formula, and by definition
\[
 \frac{(2g+m+p-4)!}{(2g+m-4)!} =  \left( \prod\limits_{r=1}^p (2g+ m+r-4) \right).
\]
\end{cor}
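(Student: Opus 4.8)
The plan is to deduce Corollary~\ref{cor:Qvertex} directly from the multiple point formula of Proposition~\ref{prop:Qvertex}, the bridge being a geometric expansion of the descendant insertions followed by a purely combinatorial resummation. First I would expand each insertion as a formal series,
\[
\frac{t_0 1 + t_1 H}{1 - z_i \psi} = (t_0 1 + t_1 H)\sum_{k \ge 0} z_i^k \psi^k,
\]
(only finitely many terms survive the integral, since $vdim_D = n$ forces $\sum k_i + m = n$). Using multilinearity of the bracket I would split each marked point into an \emph{$H$-part} (choosing $t_1 H$) and a \emph{$1$-part} (choosing $t_0 1$). Selecting the subset $S \subseteq \{1,\dotsc,n\}$ of size $m$ that carries $H$ contributes a scalar $t_0^{\,n-m} t_1^{\,m}$, and summing over all descendant orders $k_i \ge 0$ reproduces exactly the left-hand side of Proposition~\ref{prop:Qvertex} taken with total number of points $N = n$, with the $H$-variables being $\{z_i\}_{i \in S}$ and the $\psi$-variables being $\{z_j\}_{j \notin S}$ and no pure-identity points.

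Next I would apply Proposition~\ref{prop:Qvertex} term by term in $S$. Since $\sum_{i \in S} z_i + \sum_{j \notin S} z_j = \sum_{i=1}^n z_i$ independently of $S$, the contribution of each $S$ becomes
\[
N_{g,\ell}\, t_0^{\,n-m} t_1^{\,m} \Big(\int_\ell H\Big)^{m} \sum_{p=0}^{n-m} \Big(\prod_{r=1}^p (2g+m+r-4)\Big)\, \sigma_p\big(\{z_j\}_{j \notin S}\big) \Big(\sum_{i=1}^n z_i\Big)^{n-m-p}.
\]
All of the $S$-dependence is now concentrated in the factor $\sigma_p(\{z_j\}_{j \notin S})$, so collecting the $\binom{n}{m}$ subsets of size $m$ reduces to the identity
\[
\sum_{|S| = m} \sigma_p\big(\{z_j\}_{j \notin S}\big) = \binom{n-p}{m}\, \sigma_p(z_1,\dotsc,z_n),
\]
valid because a squarefree degree-$p$ monomial $z_{j_1}\cdots z_{j_p}$ survives precisely for those $S$ disjoint from $\{j_1,\dotsc,j_p\}$, and there are $\binom{n-p}{m}$ such $S$.

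Finally I would reindex by setting $q = n - m - p$, so that $p+q+m = n$ and $n-p = m+q$. This converts $t_0^{\,n-m}$ into $t_0^{\,p+q}$, turns $\binom{n-p}{m}$ into $\frac{(m+q)!}{m!\,q!}$, and rewrites $\big(\sum_i z_i\big)^{n-m-p}$ as $\big(\sum_i z_i\big)^{q}$; together with the stated convention $\prod_{r=1}^p(2g+m+r-4) = \frac{(2g+m+p-4)!}{(2g+m-4)!}$, collecting terms yields the claimed expression verbatim. I expect the only genuine obstacle to be the combinatorial bookkeeping in these last two steps — verifying the subset-sum identity and keeping the three indices $(m,p,q)$ correctly aligned between the subset form and the final symmetric form — while the geometric expansion and the invocation of Proposition~\ref{prop:Qvertex} are routine.
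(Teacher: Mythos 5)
Your proposal is correct and matches the paper's (implicit) derivation: the paper states Corollary~\ref{cor:Qvertex} as a direct consequence of Proposition~\ref{prop:Qvertex}, and your geometric-series expansion, subset-wise application of the proposition, and resummation via $\sum_{|S|=m}\sigma_p(\{z_j\}_{j\notin S})=\binom{n-p}{m}\sigma_p(z_1,\dotsc,z_n)$ with the reindexing $q=n-m-p$ is exactly the computation being left to the reader. All steps, including the dimension count $\sum_i k_i + m = n$ from $vdim_D = n$ and the observation that the $l_j=0$ terms absorb the pure-identity insertions, check out.
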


\section{Virtual localization on the master space}\label{section:localization}
In this section, Theorem~\ref{theorem:recursion} is proved.

\subsection{The master space and its fixed loci} \label{s:2.2}

\subsubsection{Compactified deformation to the normal cone} \label{s:2.2.1}
Define $\X$ to be the \emph{compactified deformation to the normal cone} as follows. Consider the product $X\times \Pp^1$. Pick two distinct points on $\Pp^1$ and call them $0$ and $\infty$, then
\[
\X=Bl_{D\times \{0\}}X\times \Pp^1.
\]
There is a birational morphism $\X\rightarrow X\times \Pp^1$, and it can be composed with projections to $X$ and $\Pp^1$. 
We denote the first composition by $p:\X\rightarrow X$ and the second by $\pi:\X\rightarrow \Pp^1$. 
The fiber 
\[
\X_0 := \pi^{-1}(\{0\}) \cong X \cup_D \Pp_{D}(\sO\oplus\sO(D))
\] 
is the union of $X$ and $\Pp_{D}(\sO\oplus\sO(D))$. These two pieces glue transversally along the hypersurface $D \subset X$ and the section
\[
D \cong \Pp_{D}(\sO(D))\subset \Pp_{D}(\sO\oplus\sO(D)).
\]

The following subvarieties of $\X$  will be used frequently.
\begin{itemize}
	\item $X_\infty :=\pi^{-1}(\{\infty\}) \cong X$;
	\item $X_0$ is the irreducible component of $\X_0$ which is isomorphic to $X$;
	\item $D_0 :=\Pp_D(\sO)\subset \Pp_{D}(\sO\oplus\sO(D)) \subset \X_0$.
\end{itemize}

\subsubsection{Fixed loci on $\X$}\label{s:fixedloci}
One can put $\C^*$ actions on the base $\Pp^1$ fixing $0$ and $\infty$. There are different choices and we need to fix one throughout this paper. We make the following convention.
\begin{conv}
The $\C^*$ action acts on the tangent space of $0\in \Pp^1$ with weight $-1$. 
\end{conv}

It induces a $\C^*$ action on $X\times \Pp^1$ by acting trivially on the first factor $X$. 
Since $\X$ is the blow-up of a fixed locus, \emph{there is an induced $\C^*$ action on $\X$}. 
This action acts trivially on $X$, but scales on the fibers of the $\Pp^1$ fibration $\Pp_{D}(\sO\oplus\sO(D))$.

Under this $\C^*$ action, the fixed loci are
\[
 \text{(a)} \, D_0, \quad \text{(b)} \, X_0, \quad \text{(c)} \, X_{\infty} ,
\]
and their normal bundles are
\begin{enumerate}
	\item $N_{D_0/\X}=\sO_D(D)\oplus \sO_D$ with the induced $\C^*$ action of character $1$ on $\sO_D(D)$ factor and of character $-1$ on $\sO_D$ factor.
	\item $N_{X_0/\X}=\sO_{X}(-D)$ with the induced $\C^*$ action of character $-1$ on the fibers.
	\item $N_{X_\infty/\X}=\sO_{X}$ with the induced $\C^*$ action of character $1$ on the fibers.
\end{enumerate}

We introduce the following notations for curve classes.

\begin{defn} \label{d:2.1}
Let $\gamma', \gamma \in \NE(\X)$ such that
\begin{itemize}
	\item $\gamma$ denotes the pushforward of the the fiber class in the $\Pp^1$ bundle $\Pp_D(\sO\oplus\sO(D))$;
	\item $\gamma'$ denotes the class of the strict transform of $\{p\} \times \Pp^1 \subset X \times \Pp^1$ where $p\not\in D$.
\end{itemize}
\end{defn}

\begin{defn}
\begin{itemize}
    \item Let $i_{X_0}:X_0\rightarrow \X$ be the inclusion of $X_0$.
    \item Let $i_{D_0}:D_0\rightarrow \X$ be the inclusion of $D_0$.
\end{itemize}
\end{defn}

\begin{lem}
\[
 N_1 (\X)= (i_{X_0})_*N_1(X) \oplus \Z \gamma \oplus \Z \gamma'. 
\]
\end{lem}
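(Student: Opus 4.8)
The plan is to present $\X$ as the blow-up $q\colon\X\to X\times\Pp^1$ of the smooth, codimension-two center $Z:=D\times\{0\}$ and to read off $N_1(\X)$ from $N_1(X\times\Pp^1)$ via the blow-up formula for one-cycles. Two inputs are recorded first. By the product formula for numerical one-cycles, $N_1(X\times\Pp^1)=N_1(X)\oplus\Z[\{\mathrm{pt}\}\times\Pp^1]$, the second summand being the class of a $\Pp^1$-fiber. Since $Z$ is smooth of codimension two, the exceptional divisor of $q$ is the $\Pp^1$-bundle $\Pp_D(N_{D/X}\oplus N_{\{0\}/\Pp^1})=\Pp_D(\sO(D)\oplus\sO)$, which is precisely the component $\Pp_D(\sO\oplus\sO(D))$ of $\X_0$; its fiber class is $\gamma$.

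Next I would establish the short exact sequence
\[
0 \longrightarrow \Z\gamma \longrightarrow N_1(\X) \xrightarrow{\;q_*\;} N_1(X\times\Pp^1) \longrightarrow 0 .
\]
Surjectivity of $q_*$ holds because $q$ is birational: every curve in $X\times\Pp^1$ is the image of its strict transform. For the kernel, $q$ is an isomorphism away from the exceptional divisor $E$, so any class killed by $q_*$ is carried by curves contracted by $q$, that is, by curves lying in fibers of $E\to D$. These fibers are $\Pp^1$'s, all numerically equivalent to $\gamma$ once $D$ is connected (which holds in the ample case of interest by the Lefschetz hyperplane theorem), and $\gamma\neq0$ since $E\cdot\gamma=-1$; hence $\ker q_*=\Z\gamma$. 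Equivalently, this is the standard blow-up formula for Chow groups (Fulton, \emph{Intersection Theory}, \S6.7) specialized to one-cycles and reduced modulo numerical equivalence.

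It then remains to split the sequence using the two remaining generators. The morphism $q$ restricts on $X_0$—the strict transform of $X\times\{0\}$—to an isomorphism onto $X\times\{0\}$, so $q_*(i_{X_0})_*a=(a,0)$ and $(i_{X_0})_*N_1(X)$ maps isomorphically onto the summand $N_1(X)$. Likewise $\gamma'$ is the strict transform of $\{p\}\times\Pp^1$ with $p\notin D$, a curve disjoint from $Z$, so $q_*\gamma'=(0,1)$. Thus $q_*$ carries $(i_{X_0})_*N_1(X)\oplus\Z\gamma'$ isomorphically onto $N_1(X\times\Pp^1)$, which both shows that the three pieces generate $N_1(\X)$ and splits the sequence. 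Directness is immediate: applying $q_*$ to a relation $(i_{X_0})_*a+m\gamma'+n\gamma=0$ gives $(a,m)=0$, hence $a=0$, $m=0$, and then $n\gamma=0$ forces $n=0$, yielding $N_1(\X)=(i_{X_0})_*N_1(X)\oplus\Z\gamma\oplus\Z\gamma'$.

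The step I expect to be the main obstacle is the precise determination of $\ker q_*$ on numerical one-cycles—verifying that no contracted curve beyond the fiber class survives and that $\gamma$ is non-torsion. The cleanest route is to invoke the blow-up formula for Chow groups rather than to argue cycle by cycle, keeping track only of the connectivity of $D$, which is the one hypothesis the argument genuinely uses.
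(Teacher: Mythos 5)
Your proof is correct, and in fact the paper offers no argument at all here (it records the lemma with ``The proof is straightforward''), so yours fills a genuine gap; the route you take --- splitting $q_*\colon N_1(\X)\to N_1(X\times\Pp^1)$ by the section $(i_{X_0})_*N_1(X)\oplus\Z\gamma'$ and identifying the kernel with $\Z\gamma$ --- is exactly the standard one the authors presumably had in mind. Two refinements are worth making. First, the sentence ``any class killed by $q_*$ is carried by curves contracted by $q$'' is not literally valid: a class in $\ker q_*$ is a $\Z$-combination of curve classes, e.g.\ $[C_1]-[C_2]$ with $q_*[C_1]=q_*[C_2]$, and none of the $C_i$ need be contracted. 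You already flag the fix (the blow-up formula for Chow groups), but there is an even cleaner argument purely in numerical equivalence: since $N^1(\X)=q^*N^1(X\times\Pp^1)\oplus\Z E$, any $\alpha$ with $q_*\alpha=0$ pairs to zero with all of $q^*N^1(X\times\Pp^1)$ by the projection formula, so if $E\cdot\alpha=-n$ then $\alpha-n\gamma$ pairs to zero with every divisor class and hence $\alpha=n\gamma$ in $N_1(\X)$. Second, your appeal to connectedness of $D$ (via ampleness and Lefschetz) is unnecessary and would actually be a liability: the lemma is invoked in the proof of Theorem \ref{theorem:recursion}, where $D$ is only assumed to be a smooth hypersurface, ampleness entering later in the corollaries. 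But connectedness is not needed: the duality computation above shows all fibers of $E\to D$ have identical intersection numbers with the generators of $N^1(\X)$ (zero against $q^*N^1(X\times\Pp^1)$, and $-1$ against $E$), hence are numerically equivalent regardless of whether $D$ is connected --- connectivity would only matter for Chow or algebraic equivalence. The rest of your argument (surjectivity of $q_*$ via strict transforms, $q_*(i_{X_0})_*a=(a,0)$ since $X_0\to X\times\{0\}$ is an isomorphism because $D$ is Cartier in $X$, $q_*\gamma'=(0,1)$ since $\{p\}\times\Pp^1$ misses the center, and the directness check) is accurate as written.
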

The proof is straightforward.

\subsection{The decorated graphs}
We recall the general framework of virtual localization.
Let $Y$ be a smooth projective variety admitting an action by a torus $T=(\C^*)^m$. It induces an action of $T$ on $\bM_{g,n}(Y,\beta)$ and on its perfect obstruction theory. 
Let $\bM_\alpha$ be the connected components of the fixed loci $\bM_{g,n}(Y,\beta)^T$ labeled by $\alpha$ with the inclusion $i_\alpha: \bM_\alpha \hookrightarrow \bM_{g,n}(Y,\beta)$. 
The virtual fundamental class $[\bM_{g,n}(Y,\beta)]^{vir}$ can be written as
\[
[\bM_{g,n}(Y,\beta)]^{vir}=\sum_\alpha (i_\alpha)_*  \displaystyle\frac{[\bM_\alpha]^{vir}}{e_T(N^{vir}_\alpha)}
\]
where $[\bM_\alpha]^{vir}$ is constructed from the fixed part of the restriction of the perfect obstruction theory of $\bM_{g,n}(Y,\beta)$, and the virtual normal bundle $N^{vir}_\alpha$ is the moving part of the two term complex in the perfect obstruction theory of $\bM_{g,n}(Y,\beta)$ restricted to $\bM_\alpha$.

In the following we apply the above localization to the case $Y=\X$. Moreover, we make the following convention.
\begin{conv}\label{conv:4}
We only consider $\bM_{g,n}(\X,\beta)$ such that $\beta \in (i_{X_0})_*N_1(X) \oplus \N \gamma$ throughout the rest of the paper, with the only exception in Section~\ref{s:2.5}.
\end{conv}

Consider $T=\C^*$ acting on $X$ as described in Section~\ref{s:2.2.1}. 
By analyzing the summands of the virtual localization formula, one can index the fixed loci of $\bM_{g,n}(\X, \beta)$ by the \emph{decorated graphs} defined below.

\begin{defn}\label{decoratedgraph} A \emph{decorated graph} $\Gamma=(\Gamma,\vec{p},\vec{\beta},\vec{s},\vec{\chi})$ for a genus-$g$, $n$-pointed, degree $\beta$ $\C^*$-invariant stable map consists of the following data.

\begin{itemize} 
\item $\Gamma$ a finite connected graph, $V(\Gamma)$ the set of vertices and $E(\Gamma)$ the set of edges;
\item $F(\Gamma)=\{(e,v)\in E(\Gamma)\times V(\Gamma)~|~v~\text{incident to}~ e\}$ the set of flags;
\item the label map $\vec{p}:V(\Gamma)\rightarrow \{D_0,X_0,X_\infty \}$;
\item the degree map $\vec{\beta}:E(\Gamma)\cup V(\Gamma)\rightarrow \NE(\X)$;
\item the marking map $\vec{s}:\{1,2,\dotsc,n\}\rightarrow V(\Gamma)$ for $n>0$;
\item the genus map $\vec{g}:V(\Gamma)\rightarrow \Z_{\geq 0}$.
\end{itemize}
They are required to satisfy the following conditions:
\begin{itemize}
\item $V(\Gamma), E(\Gamma), F(\Gamma)$ determine a connected graph. 
\item $\sum\limits_{e\in E(\Gamma)}\vec{\beta}(e)+\sum\limits_{v\in V(\Gamma)}\vec{\beta}(v)=\beta$.
\item $\sum\limits_{v\in V(\Gamma)} \vec{g}(v) + h^1(| \Gamma |) = g$, where $h^1(|\Gamma|)$ is the ``number of loops'' of the graph $\Gamma$. 
\end{itemize}
To simplify the notations, we will sometimes denote $\vec{p}(v), \vec{\beta}(v), \vec{\beta}(e), \vec{s}(i), \vec{g}(v)$ by $p_v, \beta_v, \beta_e, s_i, g_v$, respectively.
\end{defn}


Let  $f:(C,x_1,\dotsc,x_n)\rightarrow \X$ be a $\C^*$ invariant stable map.
We can associate a decorated graph $\Gamma$ to $f$ as the following. 

\medskip\noindent {\bf Vertices}:
\begin{itemize}
\item The connected components in $f^{-1}(\X^T)$ are either curves or points. Assign a vertex $v$ to a connected component $\comp_v$ in $f^{-1}(\X^T)$.
\item Define $p_v=D_0$, $X_0$ or $X_\infty$ depending on whether $f(\comp_v)\subset D_0$, $X_0$ or $X_\infty$, respectively.
\item When $\comp_v$ is a curve, define $\beta_v=f_*[\comp_v]\in N_1(\X)$. When $\comp_v$ is a point, define $\beta_v=0$.
\item When $\comp_v$ is a curve, define $g_v$ to be the genus of $\comp_v$. When it is a point, define $g_v=0$.
\item When the $i$-th marking lies in the component $\comp_v$, we define $s_i=v$. 
\end{itemize}
\noindent {\bf Edges}:
\begin{itemize}
\item Assign each component of $C-\mathop{\bigcup}\limits_{v\in V(\Gamma)}\comp_v$ an edge $e$. Let $\comp_e$ be the closure of the corresponding component.
\item Write $\beta_e=f_*[\comp_e]\in N_1(\X)$. Due to Convention \ref{conv:4}, $\beta_e$ is a multiple of $\gamma$. Write $k_e$ the integer such that $\beta_e=k_e\gamma$.
\end{itemize}

\begin{rmk}
If the numerical class of $f_*([C])$ had nontrivial coefficient on $\ell'$, the assignment of the graph would have involved a balancing condition on the nodes. See \cite{FL}, or \cite{MM} for details. This general case is not needed in this paper.
\end{rmk}

\begin{defn}
For our convenience, we introduce the following notations.
\begin{itemize}
\item $E_v$ denotes the set of edges that are incident to $v$;
\item $val(v)=|E_v|$ denotes the valence of $v$.
\item $n_v$ is the number of markings on the vertex $v$.
\end{itemize}
\end{defn}

\begin{lem}\label{lem:exc}
Suppose $f_*([C])=\ell\in (i_{D_0})_*N_1(D)$. 
If there is one $v\in V(\Gamma)$ such that $p_v=X_\infty$, then $\Gamma$ is a graph with a single vertex and without any edge. The same conclusion holds if there is one $i$ such that $p_{s_i}=X_\infty$.  In particular, each graph we consider in the paper is either a trivial one over $X_\infty$, or a bipartite graph with vertex components over $D_0$ and $X_0$.
\end{lem}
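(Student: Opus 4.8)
The plan is to use the projection $\pi\colon\X\to\Pp^1$ and to show that, under the hypothesis, the composite $\pi\circ f$ is constant, so the whole domain lands in a single fibre of $\pi$. First I would record how $\pi_*$ acts on the generators of $N_1(\X)$. Since $X_0$ and the $\Pp^1$-bundle $\Pp_D(\sO\oplus\sO(D))$ both lie in the single fibre $\X_0=\pi^{-1}(0)$, every class in $(i_{X_0})_*N_1(X)$ as well as the fibre class $\gamma$ is contracted by $\pi$; on the other hand $\pi_*\gamma'$ is the positive generator of $N_1(\Pp^1)$, because $\gamma'$ is the strict transform of $\{p\}\times\Pp^1$ and maps isomorphically onto $\Pp^1$. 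Thus $\pi_*\colon N_1(\X)\to N_1(\Pp^1)\cong\Z$ reads off exactly the $\gamma'$-coefficient.

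Next I would observe that the hypothesis $\ell=f_*[C]\in (i_{D_0})_*N_1(D)$ has vanishing $\gamma'$-coefficient, i.e.\ $\pi_*\ell=0$. This is immediate, since $D_0\subset\X_0=\pi^{-1}(0)$ forces $\pi\circ i_{D_0}$ to be constant and hence $\pi_*(i_{D_0})_*=0$; equivalently, Convention~\ref{conv:4} already guarantees that $\ell$ carries no $\gamma'$-component. Consequently $\pi\circ f\colon C\to\Pp^1$ has degree $\deg\bigl((\pi f)_*[C]\bigr)=\pi_*\ell=0$. As $C$ is connected and proper, any component on which $\pi\circ f$ is non-constant would map onto $\Pp^1$ and contribute positive degree; degree $0$ therefore forces every component to be contracted, so $\pi\circ f$ is constant, and $\C^*$-invariance of $f$ pins this constant value to a fixed point $0$ or $\infty$.

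Now suppose some vertex $v$ has $p_v=X_\infty$ (the case of a marking with $p_{s_i}=X_\infty$ is identical, as it likewise places a point of $f(C)$ in $X_\infty$). Then $f(\comp_v)\subset X_\infty=\pi^{-1}(\infty)$, so $\pi\circ f$ attains the value $\infty$ and is hence constantly $\infty$, whence $f(C)\subset X_\infty\cong X$. Because the $\C^*$-action is trivial on $X_\infty$, the whole curve is pointwise fixed, so $f^{-1}(\X^T)=C$ is a single connected component: $\Gamma$ is one vertex over $X_\infty$ with no edges, as claimed. If instead no vertex or marking lies over $X_\infty$, then all vertices lie over $D_0$ or $X_0$, and each edge is a cover of a fibre of $\Pp_D(\sO\oplus\sO(D))\to D$ (of class a multiple of $\gamma$ by Convention~\ref{conv:4}); the two $\C^*$-fixed points of such a fibre lie on $\Pp_D(\sO)=D_0$ and on $\Pp_D(\sO(D))=D\subset X_0$ respectively, so every edge joins a $D_0$-vertex to an $X_0$-vertex and $\Gamma$ is bipartite over $D_0$ and $X_0$.

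I expect the only step requiring genuine care to be the first one, namely verifying that the admissible curve classes carry no $\gamma'$-component so that $\pi\circ f$ is forced to be constant; once this is in place, the structural dichotomy is a formal consequence of the equivariant geometry of the fibres of $\pi$.
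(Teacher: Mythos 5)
Your proposal is correct and takes essentially the same route as the paper: the paper's entire proof is the one-line remark that $\beta_\Gamma$ has no horizontal $\gamma'$-component and the domain curve is connected, which is precisely your observation that $\pi_*\colon N_1(\X)\to N_1(\Pp^1)$ reads off the $\gamma'$-coefficient, so $\pi\circ f$ has degree zero, is constant, and (by $\C^*$-invariance) lands on $0$ or $\infty$, yielding the stated dichotomy. You have simply made explicit the details the paper leaves to the reader, including the correct identification of the edges as multiple covers of fibers of $\Pp_D(\sO\oplus\sO(D))$ joining $D_0$ to $D\subset X_0$.
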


\begin{proof}
This is due to the fact that the curve class $\beta_\Gamma$ has no horizontal $l'$ components and is connected.
\end{proof}

The fixed loci of $\bM_{g,n}(\X,\beta)$ can be grouped by the decorated graphs. 
Denote by $\bM_\Gamma$ the union of fixed components parametrizing stable maps corresponding to $\Gamma$ and by $N_\Gamma^{vir}$ the virtual normal bundle of $\bM_\Gamma$. 
The next goal is to make the localization residue for each $\bM_\Gamma$ more explicit. A few more definitions are introduced below, partially following \cite[Definition~53]{liu}.

\begin{defn}\label{defn:stable}
A vertex $v\in V(\Gamma)$ is called \emph{stable} if $2g_v-2+val(v)+n_v>0$. Let $V^S(\Gamma)$ be the set of stable vertices in $V(\Gamma)$. Let
\begin{align*}
V^1(\Gamma) &= \{ v\in V(\Gamma) \,|\, g_v=0, val(v)=1, n_v=0 \}, \\
V^{1,1}(\Gamma) &= \{ v\in V(\Gamma) \,|\, g_v=0, val(v)=n_v=1 \}, \\
V^{2}(\Gamma) &= \{ v\in V(\Gamma) \,|\, g_v=0, val(v)=2, n_v=0 \}.
\end{align*}
The union of $V^1(\Gamma), V^{1,1}(\Gamma), V^2(\Gamma)$ is the set of \emph{unstable} vertices.
\end{defn}

\begin{defn}
Define an equivalence relation $\sim$ on the set $E(\Gamma)$ by setting $e_1\sim e_2$ if there is a $v\in V^2(\Gamma)$ such that $e_1, e_2\in E_v$.
Let $\overline E(\Gamma) :=E/\sim$.
\end{defn}

One easily sees that a class $[e]\in \overline E(\Gamma)$ consists of a chain of edges, say $e_1,e_2,\dotsc,e_m$ such that $e_i$ and $e_{i+1}$ intersect at a $v_i\in V^2(\Gamma)$. There are also two vertices $v_0\in e_1$ and $v_m\in e_m$ such that $v_0, v_m\not\in V^2(\Gamma)$.

\begin{defn}\label{defn:vun}
Define $V^{in}_{[e]}=\{v_1,\dotsc,v_{m-1}\}$ and $V^{end}_{[e]}=\{v_0, v_m\}$.
\end{defn}

\begin{defn}\label{defn:leg}
Define $\overline E^{leg}(\Gamma)$ to be the set of edge classes $[e]\in \overline E(\Gamma)$ such that $V^{end}_{[e]}\bigcap V^1(\Gamma)\neq \emptyset$ or $V^{end}_{[e]}\bigcap V^{1,1}(\Gamma)\neq \emptyset$.
\end{defn}

\begin{defn}\label{defn:sides}
$V^{D_0}(\Gamma) :=\{v\in V(\Gamma) \,|\, p_v=D_0\}$ and $V^{P}(\Gamma) :=\{v\in V(\Gamma) \,|\, p_v=X_0\}$.
\end{defn}

\begin{defn}\label{defn:unstablevertex}
Define $V_{[e]}^{D_0,S}=V^{D_0}(\Gamma)\cap (V^{in}_{[e]}\cup (V^{end}_{[e]}\cap V^{S}(\Gamma)))$. In other words, they are all the $D_0$ vertices on the chain $[e]$ except for unstable ones at the two ends. Similarly, we define $V_{[e]}^{X_0,S}=V^{X_0}(\Gamma)\cap (V^{in}_{[e]}\cup (V^{end}_{[e]}\cap V^{S}(\Gamma)))$.
\end{defn}

Definitions \ref{defn:stable}-\ref{defn:unstablevertex} are artificially introduced mostly because we want to label certain terms in the virtual localization formula later.

\subsection{Recursion}\label{section:recursion}
In this subsection we derive the recursion formula which yields Theorem~\ref{theorem:recursion}.
The idea 
is to consider equivariant Gromov--Witten invariants on $X$ which on the one hand vanish by dimension reason, 
and on the other hand provide relations among invariants on $D$ and (twisted) invariants on $X$ thanks to the localization formula.
Here we note that each fixed locus in $\X$ is either isomorphic to $D$ or $X$.
Our claim is that, by carefully choosing the ingredients, one can obtain a set of relations which completely determine invariants on $D$ from those on $X$.

Let $\ell\in \NE(D)$ be an effective curve class. Recall that 
\[
vdim_D=vdim(\bM_{g,n}(D,\ell))=(1-g)(\dim (D)-3)+(\ell,-K_D)+n.
\]
We can similarly denote
\[
vdim_\X=vdim(\bM_{g,n}(\X,(i_{D_0})_*\ell))=(1-g)(\dim (\X)-3)+(\ell,-K_D)+(\ell,D)+n.
\]
Let $\alpha_1,\dotsc,\alpha_n\in H^*(D)_X$ be cohomology classes and $a_1,\dotsc,a_n\in \N$ be nonnegative integers such that $\sum\limits_{i=1}^n (a_i+deg(\alpha_i))=vdim_D$. By pulling back from $X$, there are liftings of these cohomology classes $\tilde\alpha_1,\dotsc,\tilde\alpha_n\in H_{\C^*}^*(\X)$ such that $i_{D_0}^*\tilde\alpha_i=\alpha_i$.
Consider the invariant
\begin{equation}\label{eqn:comp}
\langle \psi^{a_1}\tilde\alpha_1,\dotsc,\psi^{a_n}\tilde\alpha_n \rangle^{\X}_{g,n,(i_{D_0})_*\ell}.
\end{equation}
For virtual dimension reason, we have
\begin{lem}\label{lem:vanishing}
If $2(g-1)< (\ell,D)$, $\langle \psi^{a_1}\tilde\alpha_1,\dotsc,\psi^{a_n}\tilde\alpha_n \rangle^{\X}_{g,n,(i_{D_0})_*\ell}=0$.
\end{lem}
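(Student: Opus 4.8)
The plan is to deduce the vanishing from a pure dimension count, exploiting that \eqref{eqn:comp} is an equivariant integral over a \emph{proper} moduli space, so its value lies in $H^*_{\C^*}(\mathrm{pt})=\C[\lambda]$, which carries no classes of negative degree. Concretely, I would first write the invariant as the equivariant pushforward to a point,
\[
\langle \psi^{a_1}\tilde\alpha_1,\dotsc,\psi^{a_n}\tilde\alpha_n \rangle^{\X}_{g,n,(i_{D_0})_*\ell} = \int_{[\bM_{g,n}(\X,(i_{D_0})_*\ell)]^{vir}} \prod_{i=1}^n \psi_i^{a_i}\, ev_i^*\tilde\alpha_i.
\]
Since $\X$ is projective, the moduli space $\bM_{g,n}(\X,(i_{D_0})_*\ell)$ is proper, so this pushforward is well defined and takes values in $\C[\lambda]$, with $\lambda$ of complex degree $1$.

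Next I would record the complex degree of the integrand. Each $\tilde\alpha_i$ may be chosen as a homogeneous equivariant lift of $\alpha_i$ of the same cohomological degree (restriction to a fixed locus surjects onto ordinary cohomology, so a homogeneous lift of the homogeneous class $\alpha_i$ exists), whence $\psi_i^{a_i}\,ev_i^*\tilde\alpha_i$ has complex degree $a_i+\deg(\alpha_i)$. By the standing hypothesis $\sum_{i=1}^n(a_i+\deg(\alpha_i))=vdim_D$, the full integrand is homogeneous of complex degree exactly $vdim_D$.

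I would then compare this with the virtual dimension of the target moduli space. Substituting $\dim\X=\dim D+2$ into the two dimension formulas displayed just before the lemma gives
\[
vdim_\X - vdim_D = 2(1-g) + (\ell, D) = (\ell, D) - 2(g-1),
\]
which is strictly positive precisely under the assumption $2(g-1)<(\ell,D)$; hence $vdim_\X>vdim_D$. Finally, a proper equivariant pushforward sends a homogeneous class of complex degree $d$, integrated against a virtual class of complex dimension $v$, into $H^{2(d-v)}_{\C^*}(\mathrm{pt})$. Here $d=vdim_D<vdim_\X=v$, so $d-v<0$ and the target group vanishes; therefore the invariant is $0$.

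I do not anticipate a genuine obstacle, since the argument is exactly the \emph{virtual dimension reason} announced before the lemma. The only points requiring care are the existence of homogeneous equivariant lifts $\tilde\alpha_i$ of the prescribed degree, which is standard, and the virtual-dimension identity, which follows immediately from the two displayed formulas once $\dim\X=\dim D+2$ is inserted. Conceptually, the auxiliary space $\X$ has dimension two larger than $D$, and the degree bound $2(g-1)<(\ell,D)$ guarantees that the surplus in $vdim_\X$ strictly exceeds what the fixed insertions, of total complex degree $vdim_D$, can absorb.
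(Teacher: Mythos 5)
Your proof is correct and is precisely the ``virtual dimension reason'' the paper invokes (the paper states Lemma~\ref{lem:vanishing} without further argument): homogeneous equivariant lifts of total degree $vdim_D$ exist (most directly as $p^*$ of classes on $X$, using $\alpha_i\in H^*(D)_X$), and since $vdim_\X-vdim_D=2(1-g)+(\ell,D)>0$ under the hypothesis, the proper equivariant pushforward lands in a negative-degree piece of $\C[\lambda]$ and vanishes. Nothing further is needed.
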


The virtual localization formula states that
\begin{align}\label{eqn:vloc}
\langle \psi^{a_1}\tilde\alpha_1,\dotsc,\psi^{a_n}\tilde\alpha_n \rangle^{\X}_{g,n,(i_{D_0})_*\ell} = \sum\limits_{\Gamma} Cont_\Gamma,
\end{align}
where $Cont_\Gamma$ is the localization residue associated with the fixed locus $M_\Gamma$ whose closed points are invariant stable maps assigned corresponding to the graph $\Gamma$. We can separate graphs with $1$ vertex and graphs with more than $1$ vertices.
\[
\langle \psi^{a_1}\tilde\alpha_1,\dotsc,\psi^{a_n}\tilde\alpha_n \rangle^{\X}_{g,n,(i_{D_0})_*\ell} = \sum\limits_{|V(\Gamma)|=1} Cont_\Gamma + \sum\limits_{|V(\Gamma)|>1} Cont_\Gamma.
\]
This formula can be made more explicit as the following.

\begin{align}\label{eqn:loc}
\begin{split}
& \langle \psi^{a_1}\tilde\alpha_1,\dotsc,\psi^{a_n}\tilde\alpha_n \rangle^{\X}_{g,n,(i_{D_0})_*\ell}\\
=& \langle \psi^{a_1}\alpha_1,\dotsc,\psi^{a_n}\alpha_n \rangle^{D,\sO^-\oplus\sO(D)^+}_{g,n,\ell} + \langle \psi^{a_1}\tilde\alpha_1|_{X_\infty},\dotsc,\psi^{a_n}\tilde\alpha_n|_{X_\infty} \rangle^{X,\sO^+}_{g,n,\ell} +\\
& \sum\limits_{|V(\Gamma)|> 1}\displaystyle\frac{1}{Aut(\Gamma)} \left( \mathlarger{\sum}\limits_{\{i_{[e]}\}_{[e]\in \overline E(\Gamma) \setminus \overline E^{leg}(\Gamma)}} \prod\limits_{v\in V^S(\Gamma)} \langle \dotsc \rangle^{p_v,\E_v}_{g_v,val(v),\beta_v} \right),
\end{split}
\end{align}
where the sum ranges over all decorated graphs of genus $g$, degree $\ell$ with $n$ markings.
A few explanations about the notations are in order:
\begin{enumerate}

\item In ``$\langle \dotsc \rangle^{p_v,\E_v}_{g_v,val(v),\beta_v}$", when $p_v=D_0$, the twisting bundle $\E_v=\sO^- \oplus \sO(D)^+$. When $p_v=X_0$, $\E_v=\sO(-D)^-$. $p_v$ won't be $X_\infty$ in this summation by Lemma \ref{lem:exc}.

\item Let $\{T_i\}$ be a $\C$-basis of $H^*(D;\C)$ and $\{T^i\}$ the dual basis. In the summation, each $i_{[e]}$ in $\{i_{[e]}\}_{e\in \overline E(\Gamma)-\overline E^{leg}(\Gamma)}$ determines an element $T_{i_{[e]}}$ in the basis $\{T_i\}$. And we run over all basis for each edge class in $\overline E(\Gamma)-\overline E^{leg}(\Gamma)$. 

\item $\langle \dotsc \rangle^{p_v,\E_v}_{g_v,val(v),\beta_v}$ is formulated according to the following rules. 
For the $i$-th marking, the class $\psi^{a_i}\alpha_i|_{p_{s_i}}$ is inserted into $\langle \dotsc \rangle^{p_{s_i},\E_{s_i}}_{g_{s_i},val(s_i),\beta_{s_i}}$ as long as $s_i\not\in V^{1,1}(\Gamma)$. For each $[e]\in \bar E(\Gamma)$, insertions are added according to the following rules.
\end{enumerate}

\textbf{Notations:}
\begin{itemize}
    \item For any $[e]\in \overline E(\Gamma)$, let $v_+$ and $v_-$ be the two vertices in $V^{end}_e$ (whichever to be $v_+, v_-$ is arbitrary). 
    \item Define $\iota_{+}$ to be the inclusion $\iota_{+}:D\rightarrow X$ if $p_{v_{+}}=X_0$, or the identity map $\iota_{+}:D\rightarrow D$ if $p_{v_{+}}=D_0$. $\iota_{-}$ is defined in the same way according to $p_{v_-}$. 
    \item Let $e_+$($e_-$ resp.) be the edge in the class $[e]$ that contains $v_+$($v_-$ resp.). 
    \item For a vertex $v$, define $\delta_v$ to be $1$ if $p_v$ is $D_0$, and $-1$ if it is $X_0$.
\end{itemize}

\textbf{Rules:}
For each edge class $[e]\in \overline E(\Gamma)$, an insertion is added into $\langle \dotsb \rangle_{g_{v_{+}},val({v_{+}}),\beta_{v_{+}}}^{p_{v_{+}},\E_{v_+}}$ and $\langle \dotsb \rangle_{g_{v_{-}},val({v_{-}}),\beta_{v_{-}}}^{p_{v_{-}},\E_{v_-}}$ factor for each $v_+, v_-$ as long as it is a stable vertex. These insertions are explicitly described below.
\begin{itemize}
\item Suppose one of $v_+$ and $v_-$ is in $V^1(\Gamma)$. Say $v_-\in V^1(\Gamma)$. Then 
\[
(\iota_+)_* \left( \displaystyle\frac{1}{k_{e_-}} \cdot \frac{Edge(\Gamma,[e])}{\delta_{v_+}(\lambda+D)/k_{e_+}-\psi} \right)
\]
is inserted into the summand $\langle \dotsb \rangle_{g_{v_+},val({v_+}),\beta_{v_+}}^{p_{v_+},\E_{v_+}}$.

\item Suppose one of $v_+$ and $v_-$ is in $V^{1,1}(\Gamma)$. Say $v_-\in V^1(\Gamma)$ and the marking on $v_-$ is the $i$-th marking of $\Gamma$. Then 
\[
(\iota_+)_* \left( \displaystyle\frac{ \left( \delta_{v_-}(\lambda+D)/k_{e_-} \right)^{a_i}\alpha_i|_{p_{s_i}}}{\delta_{v_-}(\lambda+D)} \cdot \frac{Edge(\Gamma,[e])}{\delta_{v_+}(\lambda+D)/k_{e_+}-\psi} \right)
\]
is inserted into the summand $\langle \dotsb \rangle_{g_{v_+},val({v_+}),\beta_{v_+}}^{p_{v_+},\E_{v_+}}$.

\item Suppose otherwise. An insertion 
\[
(\iota_+)_* \left( \displaystyle\frac{Edge(\Gamma,[e])T_{i_{[e]}}}{\delta_{v_+}(\lambda+D)/k_{e_+}-\psi} \right)
\]
should be placed in the summand $\langle \dotsb \rangle_{g_{v_+},val({v_+}),\beta_{v_+}}^{p_{v_+},\E_{v_+}}$. In the meantime an insertion 
\[
(\iota_-)_* \left( \displaystyle\frac{T^{i_{[e]}}}{\delta_{v_-}(\lambda+D)/k_{e_-}-\psi} \right)
\]
should be placed in the $\langle \dotsb \rangle_{g_{v_-},val({v_-}),\beta_{v_-}}^{p_{v_-},\E_{v_+}}$ summand. 
\end{itemize}

This $Edge(\Gamma,[e])$ can be computed in our case.
\[
	Edge(\Gamma,[e])=\displaystyle \frac{1}{k_e}\frac{ (-\lambda)^{|V_{[e]}^{D_0,S}|} D^{|V_{[e]}^{X_0,S}|} }{ \prod\limits_{v\in V^{in}_{[e]}}  \left( \sum\limits_{e'\in E_v}\displaystyle\frac{\delta_{v}(\lambda+D)^2}{k_{e'}} \right) } \prod\limits_{e\in [e]}\displaystyle\frac{\prod\limits_{m=1}^{k_e-1} \left( -\lambda+\displaystyle\frac{m}{k_e}(\lambda+D) \right) }{(-1)^{k_e-1}\left[ \prod\limits_{m=1}^{k_e-1}\displaystyle\frac{m}{k_e}(\lambda+D) \right]^2 }.
\]

\subsubsection{Conclusion of proof}
When $|V(\Gamma)| = 1$, the two possible graphs are $p_v=D_0$ and $p_v=X_\infty$ where $v$ is the unique element in $V(\Gamma)$. These two cases correspond to the two summands in the first line of the right hand side of \eqref{eqn:loc} 
\[
\langle \psi^{a_1}\alpha_1,\dotsc,\psi^{a_n}\alpha_n \rangle^{D,\sO^-\oplus\sO(D)^+}_{g,n,\ell} + \langle \psi^{a_1}\tilde\alpha_1|_{X_\infty},\dotsc,\psi^{a_n}\tilde\alpha_n|_{X_\infty} \rangle^{X,\sO^+}_{g,n,\ell}
\]

To obtain the form of the Theorem \ref{theorem:recursion}, we notice that
\[
\langle \psi^{a_1}\alpha_1,\dotsc,\psi^{a_n}\alpha_n \rangle_{g,n,\ell}^{D,\sO^-\oplus\sO(D)^+}=(-1)^{1-g}\langle \psi^{a_1}\alpha_1,\dotsc,\psi^{a_n}\alpha_n \rangle_{g,n,\ell}^D\lambda^{-2+2g-(\ell,D)}.
\]
The reason is the following. As part of the assumption, we already have $\sum\limits_{i=1}^n(a_i+deg(\alpha_i))=vdim_D$. We know the twisting
\[\dfrac{1}{e_{\C^*}(\sO_{g,n,\ell}\oplus\sO(D)_{g,n,\ell})}=\lambda^{-2+2g-(\ell,D)}+\dotsc.\]
But there is no room for the lower degree terms to come into play. In equation \eqref{eqn:loc}, one can easily verify that all other terms are lower order terms as described in Theorem \ref{theorem:recursion}.
\emph{This concludes the proof of Theorem \ref{theorem:recursion}}.

\subsection{An Example}
Let $D$ be $\Pp^1$ and $X$ be $\Pp^2$. $D$ embeds into $X$ as a line. Consider $\ell$ to be the degree $1$ class in $D$. Applying equation \eqref{eqn:loc}, we get the following.
\begin{equation}\label{eqn:ex1}
\langle ~ \rangle_{0,0,1}^D \lambda^{-3} = \langle ~ \rangle_{0,0,1}^{\Pp^2,\sO^+}+ \text{(lower order terms)} .
\end{equation}
Denote the hyperplane class in $\Pp^2$ by $H$. One easily finds out that the lower order terms consists of
\begin{equation}
\displaystyle \langle \frac{H}{-\lambda-\psi-H}  \rangle_{0,1,1}^{\Pp^2,\sO(-1)^-} .
\end{equation}
Expand the geometric series 
\[
\displaystyle\frac{H}{-\lambda-\psi-H} =-\frac{H}{\lambda}\left( 1-\frac{\psi+H}{\lambda} + \left(\frac{\psi+H}{\lambda}\right)^2 + \dotsc  \right).
\]

By counting the virtual dimension, the only nonzero term is
\[
-\langle \frac{H(\psi+H)^2}{\lambda^3}  \rangle_{0,1,1}^{\Pp^2,\sO(-1)^-} .
\]
One expands to get
\[ 
-\langle \frac{H\psi^2+2H^2\psi}{\lambda^3}  \rangle_{0,1,1}^{\Pp^2,\sO(-1)^-} .
\]
Since $h^0(\Pp^1,\sO(-1))=h^1(\Pp^1,\sO(-1))=0$, the above is nothing but an untwisted invariant. One expand to get
\[
-\langle H\psi^2  \rangle_{0,1,1}^{\Pp^2}\lambda^{-3} - 2 \langle H^2\psi  \rangle_{0,1,1}^{\Pp^2}\lambda^{-3} .
\]
Writing out the small $J$-function for $\Pp^2$, one gets
\[
\langle H\psi^2  \rangle_{0,1,1}^{\Pp^2}=-3,\hspace{1in} \langle H^2\psi  \rangle_{0,1,1}^{\Pp^2}=1 .
\]
Plugging everything into Equation (\ref{eqn:ex1}), one computes $\langle ~ \rangle_{0,0,1}^D=1$ which is exactly what we expect.

\section{Towards a generating function formulation} \label{section:genf}
In this section, a generating function formulation of Theorem~\ref{theorem:recursion} in the case of $D=Q$, the quintic hypersurface in $X=\Pp^4$, is sketched.

It should be possible to organize the recusion \eqref{eqn:loc} for general pair of $(X,D)$ into a more useful form using generating functions. 
An interesting and simplifying situation is when $D$ is a Calabi-Yau hypersurface of dimension $3$, when the notations are greatly simplified. 
Here we consider the case when $X=\Pp^4$ and $D=Q$ the quintic hypersurface. It is our hope that by reorganizing our relations in terms of generating functions, the partition function $F_g$ might be expressed in terms of $F_{g'}$ with $g'<g$ (mod finitely many low degree terms). In this section, we would like to demonstrate the generating function form of Equation \eqref{eqn:loc} under this specific quintic 3-fold case. The general case should work similarly. This formulation follows Givental's approach \cite{MR1866444, MR1901075}. (See also \cite{LPbook}.)

\subsection{A sketch of formulation}

We present the sketch of generating function formulation in this subsection. While some ingredients can be computed explicitly here, like the hypertail contribution at $Q_0$, others haven't been done yet. The I-function and the hypertail contribution at $Q_0$ will be computed later in section \ref{section:J} and \ref{section:hypertail}. 
These explicit computations are also used in demonstrating the fact that the degree bound \eqref{e:degbound} cannot be improved, as is discussed in Section~\ref{s:A.3}.

Let us introduce the following notations
\[
\langle\!\langle \alpha_1,\dotsc,\alpha_n \rangle\!\rangle_{g}^Q(t)=\sum\limits_{d>0}\sum\limits_{n=0}^\infty \dfrac{1}{m!}\langle \alpha_1,\dotsc,\alpha_n,t,\dotsc,t \rangle_{g,n+m,d\lin}^Qq^{d\lin},
\]
\[
\langle\!\langle \alpha_1,\dotsc,\alpha_n \rangle\!\rangle_{g}^{X,\sO(-5)^-}(t)=\sum\limits_{d>0}\sum\limits_{n=0}^\infty \dfrac{1}{m!}\langle \alpha_1,\dotsc,\alpha_n,t,\dotsc,t \rangle_{g,n+m,d\lin'}^{X,\sO(-5)^-}q^{d\lin'}.
\]
Let $S^\X(t)$ be a linear operator on $H^*(X)$ defined as follows.
\[
	S^\X(t,z)T=T+\sum\limits_{d_1>0,d_2\geq 0}q^{d_1\lin'+d_2\gamma} \left\langle\!\!\left\langle \dfrac{T}{z-\psi},T^\alpha \right\rangle\!\!\right\rangle_{0,d_1\lin'+d_2\gamma}^XT_\alpha.
\]
The big $J$-function $J^\X(t)$ is defined as 
\[
J^\X(t,z)=1+(S^\X(t,z)(T^\alpha),1)_{\X}T_\alpha,
\]
where $(,)_\X$ is the intersection pairing on $\X$.
Also let $V^\X(t,w,z)$ be a bilinear pairing on $H^*(X)$ as follows.
\[
    V^\X(t,w,z)(T_1,T_2)=\dfrac{(T_1,T_2)}{w+z}+\sum\limits_{d_1>0,d_2\geq 0}q^{d_1\lin'+d_2\gamma} \left\langle\!\!\left\langle \dfrac{T_1}{w-\psi},\dfrac{T_2}{z-\psi} \right\rangle\!\!\right\rangle_{0,d_1\lin'+d_2\gamma}^X.
\]
We have the standard fact that 
\[
V^X(t,w,z)(T_1,T_2)=\dfrac{\bigg(S(t,w)T_1,S(t,z)T_2\bigg)}{w+z}.
\]
We make the following definitions.
\begin{defn}
\begin{enumerate}
    \item $t_{Q_0}=[zJ^\X(t=0,z)-z]_+|_{Q_0}, \qquad t_{X_0}=[zJ^\X(t=0,z)-z]_+|_{X_0}$.
    \item $\hat t_{Q_0}(T)=[S^\X(t=0,z)T]_+|_{Q_0}, \qquad \hat t_{X_0}(T)=[S^\X(t=0,z)T]_+|_{X_0}$.
    \item $E(w,z;T_1,T_2)=[V^X(t=0,w,z)(T_1,T_2)]^{z,w}_+$.
\end{enumerate}
Here $[\dotsb]^{z,w}_+$ means keeping only the $w^az^b$ terms with $a,b\geq 0$ while expanded as $z/\lambda, w/\lambda$ series (the $(T_1,T_2)/(w+z)$ term is also thrown away).
\end{defn}
We call $t_{Q_0}, t_{X_0}$ the \emph{hypertail contributions} at $Q_0$ and $X_0$, respectively.
\begin{rmk}
The $I$-function is calculated in Section~\ref{section:J} and the restriction of \emph{small} $J^{\X}$ to $Q_0$ can be explicitly read off from the $I$-function. $t_{Q_0}$ is therefore calculated in Section~\ref{section:hypertail}.
In principle, all hypertails can be computed. But it is unclear to us whether it can be made explicit.
\end{rmk}

Since localization formula is homogeneous in $\lambda$, we will set $\lambda=1$ for simplicity. Let $\Omega$ be the set of genus $g$ stable graphs, and $\Gamma_0$ the stable graph with one single genus $g$ vertex. To make the text more readable, only in equation \eqref{eqn:generatingfunc} \eqref{eqn:genfuncex}, we abuse the notation by temporarily
\begin{itemize}
    \item write $\langle\!\langle \dotsb \rangle\!\rangle_{g}^Q$ instead of $\langle\!\langle \dotsb \rangle\!\rangle_{g}^Q(t=t_{Q_0}(-\psi))$, and
    \item write $\langle\!\langle \dotsb \rangle\!\rangle_{g}^{X,\sO(-5)^-}$ instead of $\langle\!\langle \dotsb \rangle\!\rangle_{g}^{X,\sO(-5)^-}(t=t_{X_0}(-\psi))$.
\end{itemize}
Now with these definitions, we can apply localization formula to the following generating series.
\begin{align}\label{eqn:generatingfunc}
\begin{split}
    &\sum\limits_{d_1>0,d_2\geq 0} q^{d_1\lin'+d_2\gamma}\langle ~ \rangle_{g,0,d_1\lin'+d_2\gamma} \\
    =&\sum\limits_{d\geq 0} \langle ~ \rangle_{g,0,d\lin}^{X,\sO^-}q^{d\lin}+(-1)^{1-g} \left\langle\!\left\langle ~ \right\rangle\!\right\rangle^Q_{g}+ \left\langle\!\left\langle ~ \right\rangle\!\right\rangle^{X,\sO(-5)^-}_{g}+ \\
    &\sum\limits_{\Gamma\in \Omega \atop{\Gamma\neq \Gamma_0}} \sum\limits_{\{i_e\}_{e\in E(\Gamma)}\atop{\{a_e,b_e\}_{e\in E(\Gamma)}}} \prod\limits_{v\in V(\Omega)}\left( (-1)^{1-g_v}\left\langle\!\left\langle\dotsb\right\rangle\!\right\rangle_{g_v}^Q + \left\langle\!\left\langle\dotsb\right\rangle\!\right\rangle_{g_v}^{X,\sO(-5)^-} \right).
\end{split}
\end{align}
Here the term $\sum\limits_{\{i_e\}_{e\in E(\Gamma)}\atop{\{a_e,b_e\}_{e\in E(\Gamma)}}} \prod\limits_{v\in V(\Omega)}\left( (-1)^{1-g_v}\left\langle\!\left\langle\dotsb\right\rangle\!\right\rangle_{g_v}^Q + \left\langle\!\left\langle\dotsb\right\rangle\!\right\rangle_{g_v}^{X,\sO(-5)^-} \right)$ is defined by packaging the insertion rules in equation \eqref{eqn:loc}. The explanation will be similar to the one in equation \eqref{eqn:loc}. The text will be unnecessarily long but the idea is very straightforward. Therefore, instead of explaining it into every detail, we would like to demonstrate how to write this term in the case when $\Gamma$ consists of two vertices of genus $g_1$ and $g_2$, respectively. Let $\{T_1,\dotsc,T_N\}$ be a basis of $H^*(X)$ and $\{T^1,\dotsc,T^N\}$ its dual basis.

\begin{ex}
Suppose $\Gamma$ consists with two vertices of genus $g_1$ and $g_2$ with an edge connecting them. This last summand can be written as the following.
\begin{align}\label{eqn:genfuncex}
\begin{split}
\sum\limits_{N\geq i\geq 1\atop{ a,b\in\Z_{\geq 0} }} &[E(w,z;T_i,T^i)]_{w^az^b}\left( (-1)^{1-g_v}\left\langle\!\left\langle (-\psi)^aT^i|_{Q_0} \right\rangle\!\right\rangle_{g_v}^Q + \left\langle\!\left\langle (-\psi)^aT^i|_{X_0} \right\rangle\!\right\rangle_{g_v}^{X,\sO(-5)^-} \right) \\
&\left( (-1)^{1-g_v}\left\langle\!\left\langle (-\psi)^bT_i|_{Q_0} \right\rangle\!\right\rangle_{g_v}^Q + \left\langle\!\left\langle (-\psi)^bT_i|_{X_0} \right\rangle\!\right\rangle_{g_v}^{X,\sO(-5)^-} \right)
\end{split}
\end{align}
\end{ex}
If there are more edges, the matching of insertions should be made similarly. An easy implication of our Theorem \ref{theorem:recursion} is that $\sum\limits_{d_1>0,d_2\geq 0} q^{d_1\lin'+d_2\gamma}\langle ~ \rangle_{g,0,d_1\lin'+d_2\gamma}$ do not have those terms where $2-2g+5d_1+d_2>0$. Hence
\[
\bigg[ \sum\limits_{d_1,d_2\geq 0} q^{d_1\lin'+d_2\gamma}\langle ~ \rangle_{g,0,d_1\lin'+d_2\gamma} \bigg]_{2-2g+5d_1+d_2>0}=0,
\]
where $[\dotsb]_{2-2g+5d_1+d_2>0}$ means we truncate the series leaving the $q^{d_1\lin'+d_2\gamma}$ terms such that the inequality holds. 
The truncation can often be ignored in low genus. For example when $g=2,3$, we can simply say
\[
\sum\limits_{d_1>0,d_2\geq 0} q^{d_1\lin'+d_2\gamma}\langle ~ \rangle_{g,0,d_1\lin'+d_2\gamma}=0.
\]
On the other hand, the genus $g$ quintic partition function is already all collected in one of the summands on the right hand side of equation \eqref{eqn:generatingfunc}. By string/dilaton/divisor equations, the summand $\left\langle\!\left\langle ~ \right\rangle\!\right\rangle^Q_{g}(t=t_{Q_0}(-\psi))$ can be reduced into a variation of quintic partition function in the following form.
\begin{equation}
    \left\langle\!\left\langle ~ \right\rangle\!\right\rangle^Q_{g}(t=t_{Q_0}(-\psi))=\sum\limits_{d\geq 0} F_{d,g}(q^{\lin'},q^\gamma)\langle ~ \rangle_{g,0,d}^Q.
\end{equation}
$F_{d,g}(q^{\lin'},q^\gamma)$ is computed in equation \eqref{e:20} in the Appendix and simplified in section \ref{section:genfun}.

We believe equation \eqref{eqn:generatingfunc} could shed some light into describing the quintic partition functions. But it requires more talented people to provide new inputs into this computation, especially regarding how to understand the generating series $\langle\!\langle \dotsb \rangle\!\rangle_g^{\Pp^4,\sO(-5)^-}$.

\subsection{The I-function of $\X$} \label{section:J}
We demonstrate the calculation of the equivariant small $I$-function and $J$-function of $\X$ in this subsection. The big $I$-function (and hence $J$-function in principle) are also computable, but are far more complicated.

Recall $\X$ has a map to $\Pp^4\times \Pp^1$, and the $\C^*$ action is a lifting from an action on $\Pp^4\times \Pp^1$. The $\C^*$ acts on $\Pp^4$ trivially. But a $\lambda\in \C^*$ acts on $\Pp^1$ by sending $[x:y]$ to $[x:\lambda y]$ (Suppose $[0:1]$ is our $0\in \Pp^1$). The equivariant cohomology on $\Pp^1$ has the presentation
\[
	H_{\C^*}^*(\Pp^1)=\C[h,\lambda]/(h-\lambda)h.
\]
Denote $H$ the pullback of the hyperplane class in $\Pp^4$, and $E$ the (equivariant) exceptional divisor in $X$. Notice that one can realize $\X$ as an equivariant hypersurface in $\Pp_{\Pp^4\times\Pp^1}(\sO(-K_{\Pp^4})\oplus \sO(h))$. By a slight abuse of notation, $\sO(-K_{\Pp^4})$ and $\sO(h)$ are pulled-back from $\Pp^4$ and $\Pp^1$, respectively. Under this embedding, equivariant divisor $\X$ is of class $-K_{\Pp^4}+h+\sO_P(1)$. 

Denote $D$ by $Q$, $D_0$ by $Q_0$ and introduce the following notations:
\begin{itemize}
    \item $\lin\in \NE(\X)$ is the line class supporting on $Q_0$;
    \item $\lin'\in \NE(\X)$ is the line class supporting on $X_0$.
\end{itemize}
It is easy to see that the Mori cone is generated by $\lin'$, $\gamma$ and $\gamma'$ (defined in Definition~\ref{d:2.1}) and $\lin = \lin' + 5 \gamma$.
We write 
$$\beta = d_1 \lin' + d_2 \gamma + d_3 \gamma' = d_1 \lin + (d_2 - 5 d_1) \gamma + d_3 \gamma'.$$

It follows from the results of Givental \cite{Giv2} that the small $I$-functions of GW theory of hypersurfaces in smooth toric varieties can be written down explicitly
{\small
\[
\begin{split}
	&I^\X=e^{\D/z}\sum\limits_{\beta} q^{\beta} 
	e^{(\beta, \D)} I_{\beta}, \\
	I_{\beta}=&\dfrac{\prod\limits_{m=1}^{d_2}(-E+5H-\lambda+h+mz) }{\prod\limits_{m=1}^{d_1}(H+mz)^5 \prod\limits_{m=1}^{d_2-5d_1}(-E-\lambda+h+mz) \prod\limits_{m=1}^{d_2-d_3}(-E+5H+mz)
	\prod\limits_{m=1}^{d_3}(h-\lambda+mz)
	\prod\limits_{m=1}^{d_3}(h+mz)},
\end{split}
\]
}where $\D=t_01+t_1H+t_2E+t_3h$ is a general element in $H^{\leq 2}(\X)$. We use the convention that $I_0 =1$, $1/(-n)! =0$ when $n$ is a positive integer, and
\[
 \frac{1}{\prod\limits_{m=1}^{d}(P+mz)} := \frac{\prod\limits_{m=-\infty}^{0}(P+mz)} {\prod\limits_{m=-\infty}^{d}(P+mz)}.
\]

To obtain the $J$-function \cite{Giv2}, we need to perform the mirror transformation. The summand that has nonzero coefficient of $1/z$ is when $d_1=0, d_2=1, d_3=0$ or $d_2=0, d_3=0$ with $d_1$ being arbitrary. If we only restrict to the curve classes $\beta=d_2\gamma$, the mirror transformation can be read off from the $1/z$ coefficient of the following.
\[
	q^{\gamma}e^{\D/z+t_1-t_2}\dfrac{-E+5H-\lambda+h+z}{(-E-\lambda+h+z)(-E+5H+z)}=q^\gamma e^{\D/z+t_1-t_2}\left( 1/z+O(1/z^2) \right).
\]
Therefore the mirror transformation takes $t_0$ to $t_0 + q^\gamma e^{t_1-t_2}$, and we get the restricted $J$-function as follows
\[
	J^\X(d_1=d_3=0)= e^{(-q^\gamma e^{t_1-t_2})/z} I^{\X}(d_1=d_3=0).
\]
This will be made explicit in the next section.

\subsection{The hypertail $t_{Q_0}$} \label{section:hypertail}

In this subsection, we show how to calculate the hypertail contributions and explicitly compute its restriction to Novikov variables $q^{k\gamma}$. 
Firstly, we recall that in the present setting, the $\C^*$ fixed points consist of three components: $Q_0$, $X_0$ and $X_{\infty}$.
We are interested in the hypertail associated to $Q_0$.

Recall the following definition 
$t_{Q_0}=\left[ \left( zJ^\X(t=0,z) \right)|_{Q_0} -z \right]_+ $,
where $[\dotsb]_+$ is the power series trunction in $z$.

A few explanatory remarks are in order.
The divisors restrict to $Q_0$ as follows
$$E|_{Q_0}=-\lambda, \quad E|_{X_0}=5H, \quad h|_{Q_0}=0, \quad h|_{X_0}=0.$$
By a slight abuse of notation, we denote $H |_{Q_0}$ as $H$.
Furthermore, the ``gamma'' factors in the denominator of the $J$-function expand according to the following rules extracted from the localization. When there is no $\lambda$ in the factor, it expands as a power series in $z^{-1}$
\[
 \frac{1}{\prod\limits_{m=1}^{d}(P+mz)} = \left( \frac{1}{d! z^d} \right) \frac{1}{\prod\limits_{m=1}^{d}(1+ \frac{P}{mz})} 
\]
and $(1+ \frac{P}{mz})$ then expands as geometric series in $z^{-1}$.
If it involves $\lambda$, then it expands in power series in $z$ and $\lambda^{-1}$. For example,
\[
 \frac{1}{\prod\limits_{m=1}^{d}(P+ \lambda + mz)} = \frac{1}{\prod\limits_{m=1}^{d} (\lambda+mz)} \frac{1}{\prod\limits_{m=1}^{d}(1+ \frac{P}{\lambda + mz})} 
\]
and $(1+ \frac{1}{\lambda + mz})$ then expands as geometric series in $z$.

We now explain the geometric meaning of the hypertail contribution.
Without loss of generality, we set all $t_i=0$. (The values can be reconstructed from the string and divisor equations.) By \cite{GB} or \cite{FL} the Laurent series expansion in $1/z$ (power of $z$ is bounded below, but unbounded above) of $J^\X |_{Q_0}$, we have
\begin{equation}\label{eqn:Jrestr}
	J^\X
	|_{Q_0}=1+\dfrac{t_{Q_0}(z)}{z}+\sum\limits_{d>0,n}\dfrac{q^{d\lin}}{n!} T_\alpha \left\langle \dfrac{T^{\alpha}}{z(z-\psi)},t_{Q_0}(-\psi),\dotsc,t_{Q_0}(-\psi) \right\rangle_{0,n+1,d \lin}^Q,
\end{equation}
where $t_{Q_0}(z)$ is a series in $z$ and Novikov variable $q^{\lin}$. More precisely, $t_{Q_0}(z)$ is a partial sum of the localization formula for the function
\[
	zJ^\X 
	-z=\sum\limits_{d_1\geq 0,d_2\geq 0}q^{d_1\lin'+d_2\gamma} T_\alpha \left\langle \dfrac{T^{\alpha}}{z-\psi} \right\rangle_{0,1,d_1\lin'+d_2\gamma}^\X,
\]
summing only over the graphs such that
\begin{enumerate}
\item first marking lies on a vertex $v$ over $Q_0$ of degree $0$ genus $0$;
\item there is a single edge coming out of vertex $v$.
\end{enumerate}
Geometrically, $t_{Q_0}$ is a sum of tail contributions emanating from the vertex $Q_0$, and thus named. 

\begin{ex}\label{ex3.4}
Let's take $d_1=0$. 
\begin{equation} \label{e:specialJ}
	J_0 := J^\X(t_i=0,q^\lin=0)|_{Q_0}=e^{-q^\gamma/z}\sum\limits_{d_2\geq 0}q^{d_2\gamma}\dfrac{ \prod\limits_{m=1}^{d_2}(5H+mz) }{ \prod\limits_{m=1}^{d_2}(mz) \prod\limits_{m=1}^{d_2}(5H+\lambda+mz) }.
\end{equation}
One can thus read off the corresponding hypertail contribution by multiplying $z$ and take a truncation for positive power terms. For example when $d_2=1$, the corresponding hypertail contribution is
\[
	\dfrac{5H+z}{5H+\lambda+z}-1=\dfrac{-\lambda}{5H+\lambda+z},
\]
where the denominator is expanded as a $z$-series. When $d_2=2$, the corresponding hypertail contribution is
\[
	\left[  \dfrac{1}{z}\left(\dfrac{1}{2}-\dfrac{5H+z}{(5H+\lambda+z)} + \dfrac{(5H+z)(5H+2z)}{2(5H+\lambda+z)(5H+\lambda+2z)}\right) \right]_+,
\]
where $[~]_+$ is the truncation to get the positive power terms. In this case, one can check by hand that it's the sum of two graphs: one with a degree $2$ edge and one with two consecutive degree $1$ edges.
\end{ex}

\begin{ex}
It is not completely obvious how to handle \emph{directly} genus $0$ degree $1 \lin$ localization computation via summation over graphs due to the complexity of the arrangement of $5$ fiber classes. Using the hypertails to compute it is relatively easy. First, we extract the coefficients of $q^{d_2 \gamma}$ in $t_{Q_0}(z)$ for $d_2 \leq 5$ (via computers). Then, we are left to determine the following expression
\[
\bigg[ \sum\limits_{n\geq 0}\dfrac{1}{n!}\langle t_{Q_0}(-\psi),\dotsc,t_{Q_0}(-\psi) \rangle_{0,n,1}^{\Pp^4,\sO(-5)^-} \bigg]_{q^{5\gamma}}.
\]
It can be easily done by imposing the $(\C^*)^5$ action on $\Pp^4$ and using localization. Our Maple program successfully returns $2875$ as the answer. In principle, one can write a program to compute twisted invariants of $\Pp^4$ and use this method to get other low degree/genus quintic invariants.

\end{ex}

The hypertails $t_{Q_0}$ and $t_{X_0}$ can be computed, and will enter as a component in the generating function formulation later.
Below we write down the case where $d_3=0$ explicitly for $t_{Q_0}$. The general case can be done similarly, but the formulas are more complicated.


\begin{defn}
Define $H_k=\sum\limits_{i=1}^k 1/i$ to be the harmonic series.
\end{defn}

\begin{prop}
Set $q^{\gamma'} =0$ ($d_3 =0$). Then we have
\begin{align}
\begin{split}
\displaystyle t_{Q_0}(q^{\gamma'}=0)=&\sum\limits_{m=1}^\infty \frac{1}{\lambda+mz} \sum\limits_{d_1\geq 0, d_2\geq m} \dfrac{q^{d_1\lin'+d_2\gamma}}{\lambda^{d_2-2}} A_1(d_1,d_2,m) \\
-& H \cdot \sum\limits_{m=1}^\infty \frac{1}{\lambda+mz} \sum\limits_{d_1\geq 0, d_2\geq m} \dfrac{q^{d_1\lin'+d_2\gamma}}{\lambda^{d_2-1}} 5A_2(d_1,d_2,m) \\
-& H \cdot \sum\limits_{m=1}^\infty \frac{1}{(\lambda+mz)^2} \sum\limits_{d_1\geq 0, d_2\geq m} \dfrac{q^{d_1\lin'+d_2\gamma}}{\lambda^{d_2-2}} 5A_1(d_1,d_2,m) 
\,  +O(H^2),
\end{split}
\end{align}
where
\[
A_1(d_1,d_2,m)={5 d_1 \choose d_2 -m} \dfrac{(-1)^{d_2-m-1}m^{d_2-2}}{(d_1 !)^5 (d_2 -5 d_1)!},
\]
\[
A_2(d_1,d_2,m)=\dfrac{(5d_1)!}{(d_1!)^5} (-1)^{d_2-m-1} m^{d_2-1} \left( \frac{C(d_1,d_2,m)}{(m-1)!} + \frac{ (d_2 - 2 - mH_{d_1}) }{m!(d_2-m)!}{m\choose d_2-5d_1} \right),
\]
\[\displaystyle
C(d_1,d_2,m)=\sum\limits_{i=0}^{d_2-m} \dfrac{(-1)^i}{i!} {d_2-i \choose 5d_1}  \dfrac{H_{d_2-i}}{(d_2-i-m)!}.
\]
Here we use the convention that $\displaystyle {a \choose b}=0$ if $b>a$ and $\frac{1}{n!} =0$ when $n$ is negative.
\end{prop}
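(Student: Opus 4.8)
The plan is to evaluate the definition $t_{Q_0}=\bigl[\,(zJ^\X(t=0,z))|_{Q_0}-z\,\bigr]_+$ directly, by first making $J^\X|_{Q_0}$ completely explicit and then extracting its nonnegative-$z$-power part as a series in the divisor $H$ up to linear order. First I would specialize the $I$-function of Section~\ref{section:J} to $Q_0$ with $d_3=0$ and all $t_i=0$. Using the restriction rules $E|_{Q_0}=-\lambda$, $h|_{Q_0}=0$, $H|_{Q_0}=H$, the factor $-E-\lambda+h+mz$ collapses to $mz$ and $-E+5H+mz$ to $5H+\lambda+mz$, giving
\[
I^\X|_{Q_0}=\sum_{d_1,d_2\geq 0}q^{d_1\lin'+d_2\gamma}\,\frac{\prod_{m=1}^{d_2}(5H+mz)}{\prod_{m=1}^{d_1}(H+mz)^5\,\prod_{m=1}^{d_2-5d_1}(mz)\,\prod_{m=1}^{d_2}(5H+\lambda+mz)}.
\]
Two bookkeeping facts are worth recording here. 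By the convention on products over nonpositive ranges, the factor $\prod_{m=1}^{d_2-5d_1}(mz)$ forces a summand to vanish at $Q_0$ unless $d_2\geq 5d_1$, which (with the convention $\binom ab=0$ for $b>a$) is the source of the binomials such as $\binom{5d_1}{d_2-m}$ in the answer; and at $H=0$ the powers of $z$ cancel, leaving the summand equal to $\frac{d_2!}{(d_1!)^5(d_2-5d_1)!}\prod_{m=1}^{d_2}(\lambda+mz)^{-1}$.

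Next I would pass from $I$ to $J$ by the mirror transformation, extending the computation of Section~\ref{section:J} to $d_1>0$. The claim is that the only mirror shift surviving on $Q_0$ is the fundamental-class shift $t_0\mapsto t_0+q^\gamma$, so that $J^\X(t=0)|_{Q_0}=e^{-q^\gamma/z}\,I^\X(t=0)|_{Q_0}$, in agreement with \eqref{e:specialJ}. The point that must be checked is that the remaining mirror corrections---those coming from the $d_2=0$, $d_1>0$ summands, which do feed the global mirror map---restrict to $0$ on $Q_0$: their numerator $\prod_{k=0}^{5d_1-1}(-E-\lambda+h-kz)$ acquires a vanishing factor at $k=0$ upon restriction. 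This is consistent with the fact that the claimed formula carries no convolution in the $d_1$-direction.

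It then remains to compute $\bigl[zJ^\X(t=0)|_{Q_0}-z\bigr]_+$ and expand to first order in $H$. The prefactor $e^{-q^\gamma/z}$ convolves in the $\gamma$-degree, so the coefficient of $q^{d_1\lin'+d_2\gamma}$ carries the alternating sum $\sum_{i\geq 0}\frac{(-1)^i}{i!\,z^i}$ over the shift $d_2\mapsto d_2-i$; this index $i$ is precisely the summation variable in $C(d_1,d_2,m)$. I would then expand the three $H$-dependent products to linear order, producing the harmonic numbers: $\prod_{m=1}^{d_2-i}(5H+mz)$ contributes the $H_{d_2-i}$ in $C$, $\prod_{m=1}^{d_1}(H+mz)^5$ contributes the $H_{d_1}$ in $A_2$, and
\[
\prod_{m=1}^{d_2}(5H+\lambda+mz)^{-1}=\prod_{m=1}^{d_2}(\lambda+mz)^{-1}\Bigl(1-5H\sum_{m=1}^{d_2}\tfrac{1}{\lambda+mz}+O(H^2)\Bigr).
\]
Multiplying the last correction against the $H=0$ factor and resolving into partial fractions via
\[
\frac{1}{\prod_{m'=1}^{d}(\lambda+m'z)}=\sum_{m=1}^{d}\frac{(-1)^{d-m}m^{d-1}}{\lambda^{d-1}(m-1)!(d-m)!}\,\frac{1}{\lambda+mz}
\]
is what turns a simple pole at $z=-\lambda/m$ into a \emph{double} pole $(\lambda+mz)^{-2}$, yielding the $5A_1$ term; the $H=0$ part gives the single poles with coefficient $A_1$, while the $H$-linear numerator contributions together with the off-diagonal denominator terms give the single poles with coefficient $5A_2$. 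Folding in the extra $z^{1-i}$ from the prefactor (for $i\geq 1$ one has $\bigl[z^{1-i}(\lambda+mz)^{-1}\bigr]_+=\frac{(-m)^{i-1}}{\lambda^{i-1}}(\lambda+mz)^{-1}$) fixes the powers $m^{d_2-2}$ and $\lambda^{-(d_2-2)}$ in $A_1$ and $A_2$.

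The main obstacle will be the combinatorics of the truncation together with the $\gamma$-convolution. One must check that the polynomial-in-$z$ remainders---which arise only from the $i=0$ summand, since $\bigl[z\,(\lambda+mz)^{-1}\bigr]_+=\frac1m-\frac{\lambda}{m}(\lambda+mz)^{-1}$ carries a constant term---cancel across the convolution, leaving a pure pole expansion. The engine of these cancellations is the finite difference $\sum_{i=0}^{K}\frac{(-1)^i}{i!(K-i)!}=\frac{(1-1)^K}{K!}$, which vanishes for $K\geq 1$, together with its binomially weighted refinement $\sum_i\frac{(-1)^i}{i!}\binom{d_2-i}{5d_1}\frac{1}{(d_2-i-m)!}$ feeding the factor $\binom{5d_1}{d_2-m}$; the harmonic weight $H_{d_2-i}$ spoils the exact cancellation and is what leaves the genuinely unsummed expression $C(d_1,d_2,m)$. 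Verifying these finite-difference collapses, keeping the $H$-linear bookkeeping honest, and confirming that the truncation disturbs neither the pole structure nor the residues is the delicate part; everything else is a careful but routine expansion.
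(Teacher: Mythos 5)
Your proposal is correct and takes essentially the same route as the paper's proof: restrict the $I$-function to $Q_0$ (where the $d_2=0$, $d_1>0$ mirror corrections vanish upon restriction, so that $J^\X(t=0)|_{Q_0}=e^{-q^\gamma/z}\,I^\X(t=0)|_{Q_0}$), expand to first order in $H$ to produce the harmonic numbers, and resolve $\prod_{m=1}^{d_2}(\lambda+mz)^{-1}$ into partial fractions by Lagrange interpolation. Your explicit truncation identities such as $\bigl[z^{1-i}(\lambda+mz)^{-1}\bigr]_+=\frac{(-m)^{i-1}}{\lambda^{i-1}}(\lambda+mz)^{-1}$, together with the cancellation of the polynomial remainders, are precisely the paper's device of treating each $1/z$ as the difference operator $[A(z)/z]_+=(A(z)-A(0))/z$, and your weighted finite-difference collapse is the paper's lemma $\sum_{i=0}^{a}(-1)^i\binom{a}{i}\binom{b-i}{c}=\binom{b-a}{b-c}$ in equivalent form.
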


\begin{proof}
Recall the explicit form of the $I$-function from Section \ref{section:J}. Also recall that its $1/z$ term consists of two parts:
\begin{enumerate}
    \item $q^{\gamma}e^{P/z+t_1-t_2}/z$;
    \item Terms associated to Novikov variables $q^{d_1\lin'}$ (i.e., where $d_2=d_3=0$).
\end{enumerate}
Part (b) appears because of the following hypergeometric factor.
\[
\dfrac{\prod_{m=-\infty}^0(-E-\lambda+h+mz)}{\prod_{m=-\infty}^{-d_1}(-E-\lambda+h+mz)}.
\]
However we are only concerned with the hypertail contribution at $Q_0$. Since $E|_{Q_0}=\lambda, h|_{Q_0}=0$, part (b) vanishes for $I^{\X}|_{Q_0}$, and we can compute the restriction of $J$-function using only (a).

The computation is straightforward. We roughly sketch the methods and leave the details to the readers. It's easy to see that
\begin{align}
\begin{split}
J^\X&|_{Q_0} 
= e^{-q^\gamma/z}\sum\limits_{d_1,d_2\geq 0}q^{d_1\lin'+d_2\gamma}
\dfrac{ \prod\limits_{m=1}^{d_2}(5H+mz) }{ \prod\limits_{m=1}^{d_1}(H+mz)^5 \prod\limits_{m=1}^{d_2-5d_1}(mz) \prod\limits_{m=1}^{d_2}(5H+\lambda+mz) } \\
&= e^{-q^\gamma/z}\sum\limits_{d_1,d_2\geq 0}q^{d_1\lin'+d_2\gamma} \dfrac{ \dfrac{d_2!}{(d_1!)^5(d_2-5d_1)!}\left( 1 + 5H \left( H_{d_2}-H_{d_1}-\sum\limits_{m=1}^{d_2}\dfrac{1}{\lambda+mz} \right) \right) }{\prod\limits_{m=1}^{d_2}(\lambda+mz)}.
\end{split}
\end{align}

We expand $e^{-q^\gamma/z}=\sum\limits_{i=0}^\infty \left(\dfrac{-q^\gamma}{z}\right)^i/i!$. Recall that 
$t_{Q_0}=[zJ^\X(t_i=0)-z]_+$.
Our general strategy is the following. Suppose $A(z)$ is a power series in $z$. Notice
\[\left[\dfrac{1}{z}A(z)\right]_+=\dfrac{A(z)-A(0)}{z}.\]
Therefore, we can simply regard each $1/z$ as such a difference operator. Its effect on a general power series is complicated. But note that the effect of $1/z$ on $\dfrac{1}{\lambda+mz}$ is just a multiplication by scalar as follows. 
\[\left[\dfrac{1}{z} \cdot \dfrac{1}{\lambda+mz}\right]_+=\dfrac{-m}{\lambda} \cdot \dfrac{1}{\lambda+mz}.\]
If we expand the summands into partial fractions, we can easily apply these difference operator and finish our computation. This can be done by Lagrange interpolation. For example,
\begin{align}
\begin{split}
    \dfrac{1}{\prod\limits_{m=1}^{d_2}(\lambda+mz)} &= \sum\limits_{m=1}^{d_2} \dfrac{1}{\lambda+mz} \cdot \dfrac{1}{\prod\limits_{j\neq m}(\lambda-j\lambda/m)} \\
    &= \sum\limits_{m=1}^{d_2} \dfrac{1}{\lambda+mz} \cdot \dfrac{(-1)^{d_2-m}m^{d_2-1}}{(m-1)!(d_2-m)!\lambda^{d_2-1}}.
\end{split}
\end{align}
Along this line, one will apply $e^{-q^\gamma/z}$ on the elementary fractions. We would like to note that in the middle of the computation, it will involve the following elementary lemma.
\begin{lem}
\[ 
\sum\limits_{i=0}^{a} (-1)^i {a \choose i}{b-i\choose c}={b-a \choose b-c}. 
\]
\end{lem}
There are various proofs of this lemma. For example, one can use the standard generating function technique in combinatorics. We omit the details. The rest of the computation is routine. 
\end{proof}

In the appendix, we will calculate the $F_{K,d,g}$, which uses only $t_{Q_0}(q^{\lin'}=0)$.

\begin{cor}
\begin{align}
\begin{split}
t_{Q_0}(q^{\lin'}=0)=\sum\limits_{m=1}^\infty &\left( \dfrac{- m^{m-2}q^{m\gamma}}{(m-1)!\lambda^{m-2}(\lambda+mz)} - 5H\cdot \dfrac{5m^{m-2}q^{m\gamma}}{(m-1)!\lambda^{m-2}(\lambda+mz)^2} \right.\\
&\left. - \dfrac{5H}{\lambda+mz}\cdot \left( \dfrac{5m^{m-2}(mH_m-m-2)q^{m\gamma}}{(m-1)!\lambda^{m-1}} +\dfrac{g_m(mq^\gamma/\lambda)}{\lambda} \right) \right),
\end{split}
\end{align}
where $g_m(z)=\sum\limits_{i>m}\dfrac{z^i}{i!(i-m)}$.
\end{cor}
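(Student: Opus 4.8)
The Corollary specializes the Proposition by setting $q^{\lin'}=0$, i.e.\ restricting to $d_1=0$. My plan is therefore to take the explicit expressions for $A_1(d_1,d_2,m)$, $A_2(d_1,d_2,m)$, and $C(d_1,d_2,m)$ from the Proposition, substitute $d_1=0$, and simplify. First I would observe that with $d_1=0$ the only surviving Novikov variable is $q^{d_2\gamma}$, and the constraint structure forces $d_2-5d_1=d_2$, so the factorials $(d_1!)^5(d_2-5d_1)!$ collapse to $d_2!$ and the binomial $\binom{5d_1}{d_2-m}=\binom{0}{d_2-m}$ becomes $\delta_{d_2,m}$. This last point is the key simplification: the outer double sum over $d_1\ge 0,\ d_2\ge m$ collapses to the single diagonal term $d_2=m$.

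\emph{The $A_1$ and leading $H$ pieces.} With $d_2=m$ and $d_1=0$, I compute $A_1(0,m,m)=\binom{0}{0}\frac{(-1)^{-1}m^{m-2}}{m!}=\frac{-m^{m-2}}{m!}$. Substituting into the first line of the Proposition (dividing by $\lambda^{d_2-2}=\lambda^{m-2}$) should give exactly the first term $-\frac{m^{m-2}q^{m\gamma}}{(m-1)!\lambda^{m-2}(\lambda+mz)}$ after rewriting $m/m!=1/(m-1)!$. The same collapse applied to the third line of the Proposition yields the $\frac{1}{(\lambda+mz)^2}$ term with coefficient $5A_1(0,m,m)$, producing $-5H\cdot\frac{5m^{m-2}q^{m\gamma}}{(m-1)!\lambda^{m-2}(\lambda+mz)^2}$ once the sign and the factor $5$ are tracked. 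These two are essentially bookkeeping.

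\emph{The $A_2$ term and the function $g_m$.} The genuinely substantive step is the middle line, involving $A_2$. Setting $d_1=0$ does \emph{not} collapse the inner structure of $A_2$ to a single term, because the sum $C(d_1,d_2,m)$ over $i$ persists: with $d_1=0$ we have $\binom{d_2-i}{5d_1}=\binom{d_2-i}{0}=1$ for all $i$, so $C(0,d_2,m)=\sum_{i=0}^{d_2-m}\frac{(-1)^i}{i!}\frac{H_{d_2-i}}{(d_2-i-m)!}$, and the diagonal term $\binom{m}{d_2-5d_1}=\binom{m}{d_2}$ forces $d_2=m$ (or $d_2\le m$) only in the \emph{second} summand of $A_2$. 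Thus the $A_2$ contribution splits into a diagonal piece ($d_2=m$), which after using $H_{d_1}=H_0=0$ yields the explicit $\frac{5m^{m-2}(mH_m-m-2)q^{m\gamma}}{(m-1)!\lambda^{m-1}}$ term, and an off-diagonal tail coming from the $C$-sum over $d_2\ge m$, which must be resummed into the claimed $g_m(mq^\gamma/\lambda)/\lambda$ with $g_m(z)=\sum_{i>m}\frac{z^i}{i!(i-m)}$. The main obstacle will be precisely this resummation: I would reindex the surviving sum $\sum_{d_2\ge m}\frac{q^{d_2\gamma}}{\lambda^{d_2-1}}(-1)^{d_2-m-1}m^{d_2-1}C(0,d_2,m)$, pull out the geometric factor $(mq^\gamma/\lambda)^{d_2}$, and recognize the alternating double sum over $i$ and $d_2$ as a telescoping/convolution identity — most cleanly by swapping the order of summation and applying the elementary binomial lemma $\sum_i(-1)^i\binom{a}{i}\binom{b-i}{c}=\binom{b-a}{b-c}$ stated in the Proposition's proof, together with the harmonic-number generating function $\sum_k H_k x^k = -\frac{\log(1-x)}{1-x}$, to isolate the $\frac{1}{i-m}$ denominators that define $g_m$. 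I expect the sign tracking (the $(-1)^{d_2-m-1}$ factors interacting with the difference operator $[\tfrac1z\,\cdot\,]_+$) to be the most error-prone part, so I would verify the final answer against the $d_1=0$ restriction of Example~\ref{ex3.4}, where the $d_2=1,2$ hypertails are computed independently.
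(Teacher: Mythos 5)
Your overall route is the paper's own: the Corollary carries no separate proof because it is meant as the $q^{\lin'}=0$ (i.e.\ $d_1=0$) specialization of the Proposition in Section~\ref{section:hypertail}, and your structural analysis is exactly right --- $\binom{0}{d_2-m}$ collapses both $A_1$-lines to the diagonal $d_2=m$; in $A_2$ only the second summand collapses (via $\binom{m}{d_2}$ together with $d_2\geq m$, and $H_{d_1}=H_0=0$), while the $C(0,d_2,m)$-sum survives over all $d_2\geq m$ and must be resummed into the $g_m$-tail. The resummation target is precisely what the appendix records, $\bar A_2(m)\big|_{q^{\lin'}=0}=-\frac{m^{m-2}(m-2+mH_m)q^{m\gamma}}{(m-1)!}+g_m(mq^{\gamma})$; as a spot check, $A_2(0,1,1)=0$, $A_2(0,2,1)=\frac12$, $A_2(0,3,1)=\frac1{12}$ match $g_1(x)=\frac{x^2}{2}+\frac{x^3}{12}+\cdots$, and your toolkit (exchange of summation, the lemma $\sum_i(-1)^i\binom{a}{i}\binom{b-i}{c}=\binom{b-a}{b-c}$, harmonic-number identities) is the same one the Proposition's proof invokes.

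The gap is in the two steps you wave through as ``bookkeeping'': as printed, they do not close, because the Proposition and the Corollary are mutually inconsistent in exactly those constants, and your proposed fixes are not valid identities. From the printed $A_1$ you correctly get $A_1(0,m,m)=-m^{m-2}/m!$; the parenthetical ``rewriting $m/m!=1/(m-1)!$'' silently inserts a factor of $m$ that appears nowhere in the first line of the Proposition. What is actually true is that the printed $A_1$ has a typo: its numerator should be $m^{d_2-1}$ (compare the partial-fraction coefficient $\frac{(-1)^{d_2-m}m^{d_2-1}}{(m-1)!(d_2-m)!\lambda^{d_2-1}}$ displayed in the Proposition's proof, or Example~\ref{ex3.4}, whose $q^{2\gamma}$ contribution at $H=0$ is $-\frac{1}{\lambda+2z}$, not $-\frac{1}{2(\lambda+2z)}$), which gives $A_1(0,m,m)=-m^{m-2}/(m-1)!$ as the Corollary's first term requires. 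Conversely, for the $(\lambda+mz)^{-2}$ term no amount of ``tracking the sign and the factor $5$'' reconciles the third line of the Proposition with the printed Corollary: the Proposition with the corrected $A_1$ yields $+\frac{5Hm^{m-2}q^{m\gamma}}{(m-1)!\lambda^{m-2}(\lambda+mz)^2}$, confirmed by expanding Example~\ref{ex3.4}'s $\frac{-\lambda}{5H+\lambda+z}=-\frac{\lambda}{\lambda+z}+\frac{5\lambda H}{(\lambda+z)^2}+O(H^2)$, whereas the Corollary's printed $-5H\cdot\frac{5m^{m-2}q^{m\gamma}}{(m-1)!\lambda^{m-2}(\lambda+mz)^2}$ carries a spurious factor $-5$; similar checks at order $q^{2\gamma}$ (where the direct answer has $\frac{15Hq^{2\gamma}}{\lambda(\lambda+2z)}$ and $-\frac{5Hq^{2\gamma}}{2\lambda(\lambda+z)}$) show the printed $5(mH_m-m-2)$ should be $-(m-2+mH_m)$ and the printed $g_m(mq^\gamma/\lambda)/\lambda$ should be $\lambda\,g_m(mq^\gamma/\lambda)$, consistent with the appendix's $\bar A_2$. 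So your instinct to validate against Example~\ref{ex3.4} is not an optional safeguard: it is the step that actually determines the constants, and carrying it out will force you to correct the printed formulas rather than ``track'' them into agreement.
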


\bigskip\bigskip

\appendix

\section{On the degree bound}
\centerline{\small by H.~Fan, Y.-P.~Lee and E.~Schulte-Geers}
\maketitle


\bigskip\bigskip

As mentioned in the Introduction, our results on the quantum Lefschetz are subjected to a degree bound \eqref{e:degbound}. In this appendix, we give strong evidences that the degree bound cannot be improved using the method discussed in this paper. Curiously, using a different approach, the authors in \cite{GJR} also arrived at the same range of indeterminacy in the special case of Calabi--Yau threefolds.

In this appendix, we discuss the special case of $X=\Pp^4$ and $D=Q$ the quintic hypersurface. 
In our proof of Theorem~\ref{theorem:recursion}, we uses only curve classes $\ell \in \operatorname{NE}(D) \to \operatorname{NE}(X)$, and the resulting degree bound is of the form $2g-2 < (\ell. D)$.
It turns out that the same degree bound remains even if a general curve class $\beta \in \operatorname{NE}(X)$ is used.
This conclusion will be discussed in Section~\ref{s:A.3}.

We use the following notation.
When discussing Gromov--Witten invariants of $Q$ or $\Pp^4$, since their Mori cones are one-dimensional, we only write a curve class as an integer $d$ to indicate $d$ times of their generators. For example, $\langle \dotsb \rangle^Q_{g,n,d}$.

Let's shift our attention back to equation \eqref{eqn:comp}. In quintic case, we only need to consider
\[
	\langle ~ \rangle_{g,0,d\lin}^\X=\displaystyle\int_{[\bM_{g,0}(\X,d\lin)]^{vir}} 1.
\]
If $5d < 2g-2$, Lemma \ref{lem:vanishing} fails. It appears to be remedied by considering the curve class $d\lin+K\gamma$ instead of $d\lin$ for a sufficiently large integer $K$. By a standard virtual dimension computation, we have
\begin{lem}
$vdim(\bM_{g,0}(\X,d\lin+K\gamma))=5d+2-2g+K$.
\end{lem}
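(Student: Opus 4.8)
The plan is to apply the universal virtual dimension formula for the moduli of stable maps,
\[
vdim(\bM_{g,n}(Y,\beta)) = (1-g)(\dim Y - 3) + (-K_Y,\beta) + n,
\]
to $Y=\X$ with $n=0$, genus $g$, and $\beta = d\lin + K\gamma$. Since $\dim\X = \dim X + 1 = 5$, the $(1-g)(\dim\X-3)$ term equals $2(1-g)$, and by linearity of the intersection pairing the whole computation reduces to the two numbers $(-K_\X,\lin)$ and $(-K_\X,\gamma)$; once these are shown to be $5$ and $1$ respectively, one gets
\[
vdim(\bM_{g,0}(\X, d\lin + K\gamma)) = 2(1-g) + 5d + K = 5d + 2 - 2g + K.
\]

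First I would compute $(-K_\X,\lin)$ by restriction and adjunction. Since $\lin$ is the line class supported on $D_0\cong D = Q$, and the normal bundle $N_{D_0/\X}=\sO_D(D)\oplus\sO_D$ is recorded in Section~\ref{s:fixedloci}, adjunction $K_{D_0} = K_\X|_{D_0} + c_1(N_{D_0/\X})$ together with the Calabi--Yau condition $K_Q = 0$ gives $-K_\X|_{D_0} = \sO_D(D) = \sO_Q(5H)$. Hence $(-K_\X,\lin) = 5\,(H,\lin) = 5$. (This is consistent with the formula for $vdim_\X$ recorded in Section~\ref{section:recursion}, which encodes the identity $(-K_\X,(i_{D_0})_*\ell) = (\ell,-K_D) + (\ell,D)$; for $\ell = \lin$ the right-hand side is $0 + 5 = 5$.)

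For $(-K_\X,\gamma)$ I would exploit the relation $\lin = \lin' + 5\gamma$, reducing matters to showing $(-K_\X,\lin') = 0$ for the line $\lin'$ supported on $X_0\cong\Pp^4$. Again this is adjunction: with $N_{X_0/\X}=\sO_X(-D)=\sO_{\Pp^4}(-5)$ one finds $K_\X|_{X_0} = K_{\Pp^4} - c_1(\sO(-5)) = -5H + 5H = 0$, so $(-K_\X,\lin') = 0$ and therefore $5 = (-K_\X,\lin) = 5\,(-K_\X,\gamma)$, giving $(-K_\X,\gamma) = 1$. Alternatively one can pair $-K_\X$ directly against a fiber $F\cong\Pp^1$ of $W := \Pp_D(\sO\oplus\sO(D))$, using $-K_\X\cdot F = \deg T_F + \deg N_{F/\X} = 2 + \deg N_{W/\X}|_F$, where $\deg N_{F/W} = 0$; since $[W] = [\X_0] - [X_0]$ in $\operatorname{Pic}(\X)$ with $[\X_0]\cdot F = 0$ and $F$ meets $X_0$ transversally in the single point $F\cap\Pp_D(\sO(D))$, we get $\deg N_{W/\X}|_F = -1$, so $-K_\X\cdot F = 1$ once more.

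There is no serious obstacle here; the only genuine subtlety is the $\gamma$-pairing, since $\gamma$ lives inside the reducible central fiber $\X_0 = X_0\cup_D W$ rather than in a smooth fiber of $\pi$. Handling this cleanly is precisely what the decomposition $\lin = \lin' + 5\gamma$ (equivalently, the explicit normal bundles of $X_0$ and $D_0$ from Section~\ref{s:fixedloci}) accomplishes, after which assembling the three contributions into $5d + 2 - 2g + K$ is immediate.
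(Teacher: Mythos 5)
Your proposal is correct and is exactly the ``standard virtual dimension computation'' the paper invokes without detail: apply $vdim = (1-g)(\dim\X-3) + (-K_\X,\beta) + n$ with $\dim\X = 5$, and evaluate $(-K_\X,\lin)=5$ and $(-K_\X,\gamma)=1$ via adjunction with the normal bundles $N_{D_0/\X}=\sO_D(D)\oplus\sO_D$ and $N_{X_0/\X}=\sO_X(-D)$ together with the relation $\lin=\lin'+5\gamma$. Both your adjunction route and your alternative direct pairing of $-K_\X$ against a fiber of $\Pp_D(\sO\oplus\sO(D))$ check out (e.g.\ against $-K_\X = 5H+2h-E$ on the blowup), so there is nothing to fix.
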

Therefore when $5d+2-2g+K>0$, 
\[
	\displaystyle\int_{[\bM_{g,0}(\X,d\lin+K\gamma)]^{vir}} 1=0.
\]
Similar as before, we are left to analyze the graph sum. The leading term is the sum of graphs where there is a vertex over the quintic having the highest possible degree $d$. 
\begin{defn}
Let $\mathcal G$ be the set of degree $d\lin+K\gamma$, genus $g$, $0$-marked graphs with a vertex of degree $d$ genus $g$ over $Q_0$.
\end{defn} 

Similar to equation \eqref{eqn:vloc}, we have
\begin{align}\label{eqn:loc2}
0= \langle ~ \rangle^{\X}_{g,0,d\lin+K\gamma}
= \sum\limits_{\Gamma\in \mathcal G} Cont_\Gamma + \sum\limits_{\Gamma \not\in \mathcal G} Cont_{\Gamma}.
\end{align}
Without essential differences, the $Cont_{\Gamma}$ can be made explicit just like the ones in equation \eqref{eqn:loc}. Therefore, we won't repeat the lengthy descriptions here. Note that when $K=0$, $\sum\limits_{\Gamma\in \mathcal G} Cont_\Gamma = \langle ~ \rangle^{Q,\sO^-\oplus\sO(5)^+}_{g,0,d\lin}$, thus going back to the situation of \eqref{eqn:loc}.

Notice graphs in $\mathcal G$ has the following properties:
\begin{enumerate}
\item Except the degree $d$ genus $g$ vertex over $Q_0$, other vertices (if any) are of degree $0$ genus $0$;
\item As a result there is no loop in the underlying graph.
\end{enumerate}
One can evaluate those tree contributions coming out of the degree $d$ genus $g$ vertex over $Q_0$. By combining with Proposition \ref{prop:Qvertex}, $\sum\limits_{\Gamma\in \mathcal G} Cont_\Gamma$ can be calculated explicitly. In particular, it will be of the form $F_{K,d,g}\langle ~ \rangle_{g,0,d}^Q \lambda^{-2(1-g)+5d-K}$. The rest of the Appendix focuses on explicitly computing the coefficients $F_{K,d,g}$.

To bypass some of the combinatorics, we will use the $J$-function of $\X$ to extract the localization contribution of rational trees attached to the quintic vertex (See Section \ref{section:J}). But before that, we need to establish a formula for multiple point functions of $Q$.

\subsection{Computing $F_{K,d,g}$}
Now the leading term $F_{K,d,g}\langle ~ \rangle_{g,0,d}^Q \lambda^{-2(1-g)-5d-K}$ can be characterized as the $q^{K\gamma}$ coefficient of
\begin{equation}\label{e:FKg}
	\sum\limits_{n\geq 0} \dfrac{1}{n!} \langle t_{Q_0}(-\psi),\dotsc,t_{Q_0}(-\psi) \rangle_{g,n,d}^Q \lambda^{-2(1-g)-5d}.
\end{equation}

Recall that by string/dilaton/divisor equations, $\sum\limits_{n\geq 0} \dfrac{1}{n!} \langle t_{Q_0}(-\psi),\dotsc,t_{Q_0}(-\psi) \rangle_{g,n,d}^Q$ can be simplified into the form $F_{d,g}(q^{\lin'},q^\gamma)\langle ~ \rangle_{g,0,d}^Q\lambda^{-K}$ where $F_{d,g}(q^{\lin'},q^\gamma)$ is a power series in $q^{\lin'}, q^\gamma$.
\begin{defn}
$F_{d,g}(q^{\lin'},q^\gamma)$ is a power series in $q^{\lin'}, q^\gamma$ defined by the following equation.
\[
\sum\limits_{n\geq 0} \dfrac{1}{n!} \langle t_{Q_0}(-\psi),\dotsc,t_{Q_0}(-\psi) \rangle_{g,n,d}^Q = F_{d,g}(q^{\lin'},q^\gamma)\langle ~ \rangle_{g,0,d}^Q\lambda^{-K}
\]
\end{defn}

To simplify notations, recall we write $N_{g,d}=\langle ~ \rangle_{g,0,d}^Q$.
\begin{lem}
\[
\begin{split}
  &\langle \, \frac{s_1 1+ t_1 H}{1 - z_1\psi}, \dotsc, \frac{s_n 1+t_n H}{1- z_n \psi} \, \rangle^Q_{g,n,d} \\
= &N_{g,d} \sum_{I\subset(n), p\leq |I|} s_It_{\bar I}d^{|\bar I|} \left[\prod\limits_{r=1}^p (2g+|\bar I|+r-4)\right] \, \cdot \,  \sigma_p^I(z) \left( \sum_{i=1}^n z_i \right)^{|I|-p} ,
\end{split}
\]
where $(n)=\{1,2,\dotsc,n\}$, $\bar I=(n) - I$, $s_I=\prod\limits_{i\in I}s_i$ ($t_{\bar I}$ defined similarly), $\sigma^I_p(z)$ is the elementary symmetric polynomial of degree $p$ only using variables $z_i, i\in I$.
\end{lem}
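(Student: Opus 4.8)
The plan is to reduce the identity to Proposition~\ref{prop:Qvertex} using multilinearity of the Gromov--Witten bracket. First I would expand each insertion as a geometric series and split it into its identity branch and its divisor branch,
\[
\frac{s_i 1 + t_i H}{1 - z_i \psi} = s_i \sum_{k \geq 0} z_i^k \psi^k \cdot 1 \; + \; t_i \sum_{k \geq 0} z_i^k \psi^k \cdot H .
\]
Expanding the product of all $n$ insertions records, for each marking, a binary choice between the two branches. Grouping these choices by the subset $\bar I \subseteq (n)$ of markings assigned the divisor branch (so $I = (n)\setminus \bar I$ indexes the identity markings), multilinearity rewrites the left-hand side as
\[
\sum_{I \subseteq (n)} s_I\, t_{\bar I} \left\langle \prod_{i \in I} \frac{1}{1 - z_i \psi}, \; \prod_{j \in \bar I} \frac{H}{1 - z_j \psi} \right\rangle^Q_{g,n,d},
\]
where each factor $\frac{1}{1-z_i\psi}$ abbreviates $\sum_k z_i^k \psi^k$ and each $\frac{H}{1-z_j\psi}$ abbreviates $\sum_k z_j^k H\psi^k$.

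Next I would evaluate each inner bracket by invoking Proposition~\ref{prop:Qvertex}. For a fixed subset I apply it with total markings $N = n$, with $m = |\bar I|$ divisor insertions carrying generating variables $x_j = z_j$ for $j \in \bar I$, with $|I|$ pure-descendant insertions carrying variables $y_i = z_i$ for $i \in I$, and with no plain $1$ insertions; since $N = |I| + |\bar I|$ this meets the hypothesis $N \geq |I| + m$ with equality. Under these identifications one has $\sum_j x_j + \sum_i y_i = \sum_{i=1}^n z_i$, the symmetric polynomial $\sigma_p(y) = \sigma_p^I(z)$, and the exponent $N - m - p = |I| - p$, so that together with $\int_d H = d$ the Proposition yields
\[
\left\langle \prod_{i \in I}\frac{1}{1-z_i\psi}, \; \prod_{j\in\bar I}\frac{H}{1-z_j\psi} \right\rangle^Q_{g,n,d} = N_{g,d}\, d^{|\bar I|} \sum_{p=0}^{|I|} \left( \prod_{r=1}^p (2g + |\bar I| + r - 4) \right) \sigma_p^I(z) \left( \sum_{i=1}^n z_i \right)^{|I|-p}.
\]
Substituting this back and summing over all $I \subseteq (n)$ and all $p \leq |I|$, weighted by $s_I t_{\bar I}$, produces the claimed formula verbatim.

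I expect the only delicate point to be the bookkeeping in the application of Proposition~\ref{prop:Qvertex}: matching the Proposition's two families of generating variables $(x_j, y_i)$ to the single family $z_\bullet$, confirming that $\sigma_p$ reads off only the identity-branch variables, and checking that the exponent $N - m - p$ collapses to $|I| - p$ so that no negative powers occur (guaranteed since $p$ runs only up to $|I| = N - m$). There is no genuine geometric obstacle, as all the content sits in Proposition~\ref{prop:Qvertex} and the present Lemma is merely its \emph{de-symmetrized} refinement. As an alternative one could instead verify directly that both sides satisfy the string, dilaton and divisor equations and agree at the base invariant $N_{g,d}$, exactly as in the proof of Proposition~\ref{prop:Qvertex}; but the multilinearity reduction above is considerably shorter.
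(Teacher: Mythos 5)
Your proof is correct and follows exactly the paper's route: the paper's entire proof of this lemma is the single remark that it is ``a direct consequence of Proposition~\ref{prop:Qvertex},'' and your multilinear expansion into identity/divisor branches, followed by applying the proposition with $N=n$, $m=|\bar I|$, $x_j=z_j$ for $j\in\bar I$, $y_i=z_i$ for $i\in I$, and $\int_d H=d$, is precisely the bookkeeping that deduction requires. All your identifications (the exponent $N-m-p=|I|-p$, $\sigma_p(y)=\sigma_p^I(z)$, and the equality case $N=n+m$ of the hypothesis) check out.
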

It's a direct consequence of Proposition \ref{prop:Qvertex}. Our goal is a formula for the following.
\[  
\langle \, \frac{s_1 1+ t_1 H}{1 - z_1\psi} + \frac{u_1 H}{(1 - z_1\psi)^2}, \dotsc , \frac{s_n 1+t_n H}{1- z_n \psi} + \frac{u_nH}{(1 - z_n\psi)^2} \, \rangle^Q_{g,n,d}.
\]
A direct expansion shows that the expression can be written as
\[
\sum\limits_{J\subset (n)} \langle \, {\prod\limits_{k\in \bar J}}^\circ \, \dfrac{s_k 1 + t_k H}{1 - z_k\psi} \,,\, {\prod\limits_{j\in J}}^\circ \, \dfrac{u_j H}{(1 - z_j\psi)^2} \, \rangle^Q_{g,n,d},
\]
where $\prod^\circ$ means that different factors are treated as being in different insertions. Notice that applying $1+z_k\dfrac{\partial}{\partial z_k}$ turns the denominator of the $k$-th insertion into its square. A few further steps could go as follows.
\[
\begin{split}
& N_{g,d} \sum\limits_{J\subset (n)} \left[ \prod\limits_{j\in J} \left( 1+z_j\dfrac{\partial}{\partial z_j} \right) \right] \cdot \\
&\left. \qquad \cdot \left( \sum\limits_{I\subset (n), p\leq |I|} s_It_{\bar I}d^{|\bar I|} \left[\prod\limits_{r=1}^p(2g+|\bar I|+r-4)\right] \, \cdot \, \sigma_p^I(z) (\sum\limits_{i=1}^n z_i)^{|I|-p} \right) \right |_{s_j=0, t_j=u_j, j\in J}\\
=& N_{g,d} \sum\limits_{I\subset (n), p\leq |I|}  \left[\prod\limits_{r=1}^p(2g+|\bar I|+r-4)\right] \, \cdot \, \\
& \sum\limits_{J\subset \bar I, t\leq |J|} \left( \left[ \prod\limits_{r=0}^{t-1} (|I|-p-r) \right] \,\cdot\, \sigma_{p}^{I}(z)\sigma_t^J(z) (\sum\limits_{i=1}^n z_i)^{|I|-p-t} \,\cdot\, s_It_{\bar I-J}u_J d^{|\bar I|}\right)\\
\end{split}
\]
The equality is because the substitution $s_j=0, j\in J$ kills all the summands when $I\bigcap J\neq\emptyset$. Now a substitution is enough to determine $F_{d,g}(q^{\lin'},q^\gamma)$. 
\begin{defn}
\[
\bar A_1(m)=\sum\limits_{d_1\geq 0,d_2\geq m} q^{d_1\lin'+d_2\gamma} A_1(d_1,d_2,m),
\qquad
\bar A_2(m)=\sum\limits_{d_1\geq 0,d_2\geq m} q^{d_1\lin'+d_2\gamma} A_2(d_1,d_2,m).
\]
\end{defn}
We have the following huge unsimplified expression for $F_{d,g}(q^{\lin'},q^\gamma)$.
\begin{equation}
\begin{split}
F_{d,g}(q^{\lin'},q^\gamma)=& \sum\limits_{n=1}^\infty \dfrac{1}{n!} \sum\limits_{ k_1,\dotsb,k_n>0 } \sum\limits_{I\subset (n)} \sum\limits_{p=0}^{|I|}  d^{|\bar I|}\left\{ \left[\prod\limits_{r=1}^p(2g+|\bar I|+r-4)\right] \, \cdot \,  \right.\\
& \sum\limits_{J\subset \bar I} \sum\limits_{t=0}^{|J|} \left( \left[ \prod\limits_{r=0}^{t-1} (|I|-p-r) \right] \,\cdot\, \sigma_p^I(k_1,\dotsc,k_n) \sigma_t^J(k_1,\dotsc,k_n) (\sum\limits_{i=1}^n k_i)^{|I|-p-t} \,\cdot\, \right.\\
& \left. \left. \prod\limits_{i\in I} \bar A_1(k_i) \prod\limits_{l\in \bar I-J} 5(-\bar A_2(k_l)) \prod\limits_{j\in J} 5(-\bar A_1(k_j)) \right) \right\}.
\end{split}
\end{equation}

In order to simplify it, we first observe that the summation over $k_i$ can be rearranged, as the $k_i$ only appears after the elementary symmetric polynomials. Secondly, notice that the sets $I,J$ are only used as indices for the partitions $\{k_i\}$. Since $\{k_i\}$ ranges over all partitions of $K$, the sum is invariant under the symmetric group $\mathfrak S_n$ action on the indices of $\{k_i\}$. Therefore we conclude that only the sizes of the disjoint subsets $I,J\subset (n)$ matter in this sum.

\begin{equation} \label{e:19}
\begin{split}
F_{d,g}(q^{\lin'},q^\gamma)=& \sum_{n=1}^\infty \sum\limits_{\begin{subarray}{c} a+b \leq n \\ a,b\geq 0 \end{subarray}} \dfrac{1}{a! b! (n-a-b)!} (-5d)^{n-a} \, \cdot \\
&\left\{  \sum\limits_{p=0}^{a} \sum\limits_{t=0}^{b}  \prod\limits_{r=1}^p(2g+n-a+r-4) \, \cdot \, \prod\limits_{r=0}^{t-1} (a-p-r) \, \cdot \, \right.\\
& \left( \sum\limits_{ k_1,\dotsb,k_{n}>0 } (\sum\limits_{i=1}^n k_i)^{a-p-t} \sigma_p(k_1,\dotsc,k_{a}) \sigma_t(k_{a+1},\dotsc,k_{a+b}) \,\cdot\, \right.\\
& \left. \left. \prod\limits_{i=1}^a \bar A_1(k_i)
\prod\limits_{j=a+1}^{a+b} \bar A_1(k_j)
\prod\limits_{l=a+b+1}^{n} \bar A_2(k_l) \right) 
\right\}.
\end{split}
\end{equation}
Here the relationship with the previous summation is that $a=|I|, b=|J|$. Also note that we make some rearrangement of factors (e.g. pull out the factors $5$ from the third line and combine them into $(5d)^{n-a}$ in the first line). We can further combine the elementary symmetric polynomials.

\begin{equation}\label{e:20}
\begin{split}
F_{d,g}(q^{\lin'},q^\gamma)=& \sum_{n=1}^\infty \sum\limits_{\begin{subarray}{c} a+b \leq n \\ a,b\geq 0 \end{subarray}} \dfrac{1}{a! b! (n-a-b)!} (-5d)^{n-a} \, \cdot \\
&\left\{ \sum\limits_{p=0}^{a} \sum\limits_{t=0}^{b} \prod\limits_{r=1}^p(2g+n-a+r-4) \, \cdot \, \prod\limits_{r=0}^{t-1} (a-p-r)  \, \cdot \, \right.\\
&\left. \left( \dfrac{ {a\choose p}{b\choose t} }{ {a+b\choose p+t} } \sum\limits_{ k_1,\dotsc,k_{n}>0 } (\sum\limits_{i=1}^n k_i)^{a-p-t} \sigma_{p+t}(k_1,\dotsc,k_{a+b}) \,\cdot\, \prod\limits_{i=1}^{a+b} \bar A_1(k_i)
\prod\limits_{l=a+b+1}^{n} \bar A_2(k_l) \right) 
\right\}.
\end{split}
\end{equation}

In particular when $q^{\lin'}=0$,
\[\bar A_1(m)=-\dfrac{m^{m-2}q^{m\gamma}}{(m-1)!}, \qquad
\bar A_2(m)=-\dfrac{m^{m-2}(m-2+mH_m)q^{m\gamma}}{(m-1)!}+g_m(mq^\gamma),\]
where $g_m(z)=\sum\limits_{i>m}\dfrac{z^i}{i!(i-m)}$. For the convenience of later discussion, we need to set up a notation.

\subsection{Simplifying the generating series $F_{d,g}(z)$}\label{section:genfun}
In this subsection, Lagrange inversion will be heavily used. A good reference of related techniques could be, for example, \cite{Stanley}. From now on, we will let $z=q^\gamma$. Let $K$ be a positive integer and consider the sum ($m=2g-4$)

\begin{align*}
F_{d,g}(z)=&\sum_{n=1}^\infty \sum_{{a+b\leq n}\atop{a,b\geq 0}}\frac{(-1)^n}{a!b!(n-a-b)!}(-5d)^{n-a}\cdot\\
         &\bigg\{\sum_{p=0}^a\sum_{t=0}^b \prod_{r=1}^p(m+n-a+r)\cdot\prod_{r=0}^{t-1}(a-p-r)
         \bigg(\tfrac{{a \choose p}{b \choose t}}{{a+b \choose p+t}}\sum_{k_1,\ldots,k_n>0 }\big(\sum_{i=1}^nk_i\big)^{a-p-t}\cdot\\
         & \sigma_{p+t}(k_1,\ldots,k_{a+b})\prod_{i=1}^{a+b}\frac{k_i^{k_i-1}
z^i}{k_i!}\prod_{l=a+b+1}^n\bigg[\frac{(k_l-2+k_lH_{k_l})k_l^{k_l-1}z^{k_l}}{k_l!} -g_{k_l}(k_lz)\bigg]\bigg)\bigg\}
\end{align*}
Here $H_k$ denotes the harmonic sum $H_k=\sum_{j=1}^k\frac{1}{j}$ and 
$g_k(z)=\sum_{d>k}\frac{z^d}{d!(d-k)}$.

We use formal power series and begin with a straightforward observation. Let  
\begin{align}
\begin{split}
S_{a+b,p+t}(K):=&\sum_{{k_1+\ldots +k_n=K \atop  k_i\geq 1}} \sigma_{p+t}(k_1,\ldots,k_{a+b}) \\
&\prod_{i=1}^{a+b}\frac{k_i^{k_i-1}z^i}{k_i!}
\prod_{l=a+b+1}^n\big(\frac{(k_l-2k_l+k_lH_{k_l})k_l^{k_l-1}z^{k_l}}{k_l!}-g_{k_l}(k_lz)\big).
\end{split}
\end{align}
Let
\[
T(z)=\sum_{n\geq 1}\frac{n^{n-1}}{n!}z^n.
\]
It is well known that $T(x)$ (the ``tree function'') is the formal power series satisfying $T(x)=ze^{T(x)}$, and that for
a formal power series  $F$ the coefficients of $G(x):=F(T(x))$ are given by (Lagrange inversion)
\[[x^0]G(x)=[x^0] F(x), \qquad [x^k]G(x)=\dfrac{1}{k} [y^{k-1}] F^\prime(y)e^{ky} =[y^k](1-y)F(y)e^{ky}\text{ for } k\geq 1.\]
As a result, one easily sees that
\[\dfrac{T(x)}{1-T(x)}=\sum\limits_{i\geq 1}\dfrac{i^{i}}{i!}x^i.\]
Using these ingredients, one can rewrite $S_{a+b,p+t}(K)$ as
\[\displaystyle
S_{a+b,p+t}(K)={a+b \choose p+t} [u^K] \frac{T(zu)^{a+b}}{(1-T(zu))^{p+t}}\, Q(z,u)^{n-(a+b)},
\] where
\[
Q(z,u)=\sum_{k\geq 1} \left(\frac{(k-2+kH_{k})k^{k-1}z^{k}}{k!}-g_{k}(kz)\right)u^k.
\]

Now let $i\geq 0$ be fixed and consider the coefficient of $(-5d)^i$ (that is: $a=n-i$) in the power series $F_{d,g}(z)$ under the condition $k_1+\ldots +k_n=K$. Define
\begin{align*}
f_{i,K}=&\sum_{n\geq 1}\sum_{{n-i+b\leq n}\atop{n-i,b\geq 0}}\frac{(-1)^n}{(n-i)!b!(i-b)!}\cdot\\
         &\bigg\{\sum_{p=0}^{n-i}\sum_{t=0}^b K^{n-i-p-t}\prod_{r=1}^p(m+i+r)\cdot\prod_{r=0}^{t-1}(n-i-p-r)
         \bigg(\tfrac{{n-i \choose p}{b \choose t}}{{n-i+b \choose p+t}} S_{n-i+b, p+t}(K)\bigg)\bigg\}.
\end{align*}
Clearly only $n$ with $n-i-p-t\geq 0$ need to be considered.  
Using that, the observation above and changing the order of summation and cancelling/rewriting gives 
{\small 
\allowdisplaybreaks
\begin{align*}
f_{i,K}=&\sum_{b=0}^i  \frac{1}{b!(i-b)!}\bigg[\sum_{p\geq 0}\sum_{t=0}^b{m+i+p \choose p} {b\choose t} \cdot
\bigg\{\sum_{n\geq i+p+t} \frac{(-1)^n K^{n-i-p-t}}{(n-i-p-t)!}\,{n-i+b \choose p+t}^{-1} S_{n-i+b,p+t}(K)\bigg\}\bigg]\\
=&[u^K]\,\frac{1}{i!} \sum_{b=0}^i {i \choose b}\bigg[\sum_{p\geq 0} \sum_{t=0}^b{m+i+p \choose p} {b\choose t} 
\bigg\{\sum_{n\geq i+p+t} \frac{(-1)^nK^{n-i-p-t}}{(n-i-p-t)!}\frac{T(zu)^{n-i+b}}{(1-T(zu))^{p+t}}Q(z,u)^{i-b}\bigg\}\bigg]\\
=&[u^K]\,\frac{1}{i!} \sum_{b=0}^i {i \choose b}\bigg[\sum_{p\geq 0} \sum_{t=0}^b{m+i+p \choose p} {b\choose t} 
\bigg\{(-1)^{i+p+t} e^{-K T(uz)}\frac{T(zu)^{b+p+t}}{(1-T(zu))^{p+t}}Q(z,u)^{i-b}\bigg\}\bigg]\\
=&[u^K]\,\frac{1}{i!} \sum_{b=0}^i {i \choose b}\bigg[\sum_{p\geq 0} {m+i+p \choose p}  
\bigg\{(-1)^{i+p} e^{-K T(uz)}\big(1-\frac{T(zu)}{1-T(zu)}\big)^b\frac{T(zu)^{b+p}}{(1-T(zu))^{p}}Q(z,u)^{i-b}\bigg\}\bigg]\\
=&[u^K]\,\frac{1}{i!} (1-T(uz))^{m+i+1}\sum_{b=0}^i {i \choose b}\bigg[   
 \bigg\{(-1)^{i} e^{-K T(uz)}\big(1-\frac{T(zu)}{1-T(zu)}\big)^b T(zu)^{b} Q(z,u)^{i-b}\bigg\}\bigg]\\
=&[u^K]\,\frac{(-1)^i}{i!} (1-T(uz))^{m+i+1}   
 e^{-K T(uz)}\bigg(T(zu)\big(1-\frac{T(zu)}{1-T(zu)}\big)+ Q(z,u)\bigg)^i\\
\end{align*}
}
Now note that 
\[Q(z,u)=\frac{T(zu)}{1-T(zu)}-2T(zu) +G(z,u),\]
where 
\[
G(z,u)=\sum_{k\geq 1} \big(\dfrac{H_{k} k^{k}z^{k}}{k!}-g_{k}(kz)\big)u^k.
\]
Summing over $i$ gives
\begin{align*}\sum_{i\geq 0}(-5d)^i f_{i,K}   
&=[u^K] (1-T(uz))^{m+1}   e^{-K T(uz) +5d (1-T(zu))G(z,u)} \\
&= z^K [u^K] (1-T(u))^{m+1}   e^{-K T(u) +5d(1-T(u)) G(z,u/z)} \\
&= z^K [y^K] (1-y)^{m+2}   e^{5d (1-z)G(z,y e^{-y}/z)}\\
\end{align*}
Finally summing  over $K\geq 0$ gives
\[
F_{d,g}(z)= (1-z)^{m+2} e^{5d(1-z) G(z,\,e^{-z})}.
\]
To simplify further, one needs the following lemma. 
\begin{lem}
\[(1-z) G(z,e^{-z})=-\log(1-z).\]
\end{lem}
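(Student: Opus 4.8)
The plan is to pass to the tree function, split $G$ into its harmonic part and its $g_k$-part, evaluate each in closed form, and observe that a dilogarithm contribution cancels. Set $w=ze^{-z}$, so that $T(w)=z$, where $T$ is the tree function defined in this section (the compositional inverse of $x\mapsto xe^{-x}$). I will use the Lagrange-inversion expansions $\sum_{k\ge1}\frac{k^{k-1}}{k!}w^k=T(w)$ and $\sum_{k\ge1}\frac{k^k}{k!}w^k=\frac{T(w)}{1-T(w)}$, which under this substitution read $z$ and $\frac{z}{1-z}$. Write $G(z,e^{-z})=G_1-G_2$ with $G_1=\sum_{k\ge1}\frac{H_k k^k}{k!}w^k$ and $G_2=\sum_{k\ge1}g_k(kz)e^{-kz}$; note $G_1$ collects the $H_k$-terms (with $(ze^{-z})^k=w^k$) and $G_2$ the $g_k$-terms.

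First I would evaluate $G_1$. Using $H_k=\int_0^1\frac{1-t^k}{1-t}\,dt$ together with the second expansion above, one gets $G_1=\int_0^1\frac{1}{1-t}\bigl(\frac{T(w)}{1-T(w)}-\frac{T(tw)}{1-T(tw)}\bigr)\,dt$. The substitution $s=T(tw)$, i.e.\ $t=\frac{s}{z}e^{z-s}$ (so $s$ runs from $0$ to $T(w)=z$), converts this into an explicit one-variable integral in $z$, which one evaluates to obtain $(1-z)G_1=z-2z\log(1-z)-z\operatorname{Li}_2(z)$, where $\operatorname{Li}_2(z)=\sum_{m\ge1}z^m/m^2$. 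As a check, the coefficient of $z^k$ on the right equals $1$ for $k=1$ and $\frac{2k-3}{(k-1)^2}$ for $k\ge2$, which matches a direct expansion of $G_1$.

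Next I would treat $G_2$, which is the \emph{hard part}. Writing $g_k(kz)=\sum_{d>k}\frac{(kz)^d}{d!(d-k)}$ and setting $j=d-k$ gives $G_2=\sum_{j\ge1}\frac{z^j}{j}\sum_{k\ge1}\frac{k^{k+j}}{(k+j)!}w^k$. The obstruction is that the inner weight $\frac{k^{k+j}}{(k+j)!}$ is not a pure function of $T$ (the factorial shift $(k+j)!$ versus $k!$ spoils a direct tree-function reading), so the main work is to resum it: I would apply Lagrange inversion to the inner sum and reduce the resulting alternating binomial sums using the elementary identity $\sum_i(-1)^i\binom{a}{i}\binom{b-i}{c}=\binom{b-a}{b-c}$ recorded above, together with the harmonic-sum bookkeeping. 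The expected outcome is $(1-z)G_2=z+(1-2z)\log(1-z)-z\operatorname{Li}_2(z)$, which shares the $z$ and $z\operatorname{Li}_2(z)$ terms with $(1-z)G_1$. Subtracting, these two terms cancel and the logarithms combine, giving $(1-z)G=(1-z)(G_1-G_2)=-2z\log(1-z)-(1-2z)\log(1-z)=-\log(1-z)$, as claimed. Equivalently, and perhaps cleaner for a final write-up, one shows $[z^k]\{(1-z)G\}=\tfrac1k$ for every $k\ge1$ by extracting coefficients directly; here too the $G_2$-extraction is the step that forces the binomial lemma, and is where all the difficulty concentrates.
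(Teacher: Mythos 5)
Your intermediate claims are all true, and I verified them against direct series expansion: $(1-z)G_1=z-2z\log(1-z)-z\operatorname{Li}_2(z)$ has coefficients $1,\,1,\,\tfrac34,\,\tfrac59,\dotsc$, matching $\sum_k \frac{H_k k^k}{k!}(ze^{-z})^k$ through order $z^4$, your claimed $(1-z)G_2=z+(1-2z)\log(1-z)-z\operatorname{Li}_2(z)$ matches $\sum_k g_k(kz)e^{-kz}$ to the same order, and the subtraction does give $-\log(1-z)$. Nevertheless, as a proof this has a genuine gap, and it sits exactly where you flag it: the evaluation of $G_2$ is never carried out. You correctly observe that the weight $\frac{k^{k+j}}{(k+j)!}$ is not a pure function of the tree function, announce that ``the main work is to resum it,'' and then state only the \emph{expected outcome}; no derivation is given, and it is not shown that Lagrange inversion plus the binomial identity actually produces the dilogarithm term. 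The $G_1$ step is also asserted rather than evaluated: after the substitution $s=T(tw)$ one arrives at $G_1=\frac{1}{1-z}\int_0^z \frac{(z-s)e^{z-s}}{z-se^{z-s}}\,ds$, which is finite (the apparent endpoint singularities cancel) but is not an elementary one-liner. So what you have is a plausible program whose two hardest computations are left as conjectures --- true conjectures, but conjectures.

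Your closing remark, that the coefficientwise route is ``equivalent'' but that the $G_2$-extraction there ``forces the binomial lemma\dots where all the difficulty concentrates,'' has the situation backwards: the paper's proof \emph{is} the direct coefficient extraction, it is three lines long, and it never invokes the binomial lemma (that lemma is used elsewhere, in the hypertail computation). One shows $[z^m]G(z,e^{-z})=H_m$, which is equivalent to the statement since $(1-z)\sum_m H_m z^m=\sum_m \frac{z^m}{m}$. Writing $[z^m]G(z,e^{-z})=\frac{1}{m!}\sum_{k=1}^m(-1)^{m-k}\binom{m}{k}\sum_{j=1}^k\frac1j\bigl(k^m-(k-j)^m\bigr)$ and expanding $\frac1j\bigl(k^m-(k-j)^m\bigr)=\sum_{i=1}^m(-1)^{i+1}\binom{m}{i}j^{i-1}k^{m-i}$, the inner quantity $\sum_{j=1}^k j^{i-1}k^{m-i}$ is a polynomial of degree $m$ in $k$ with leading coefficient $\tfrac1i$, so the alternating sum over $k$ is an $m$-th forward difference and collapses to $m!\cdot\tfrac1i$, leaving the classical identity $\sum_{i=1}^m(-1)^{i+1}\binom{m}{i}\tfrac1i=H_m$. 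This finite-difference collapse is the idea missing from your proposal: it dissolves the $H_k$-part and the $g_k$-part simultaneously, with no tree function, no integral, and no dilogarithm cancellation. To complete your route you would have to actually perform the $G_2$ resummation; otherwise, the direct extraction is both shorter and easier.
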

\begin{proof} 
We have to show that for $m\geq 1$ the coefficient $g_m:=[z^m]G(z,e^{-z})=H_m$. We have
$$g_m=\frac{1}{m!}\sum_{k=1}^m (-1)^{m-k}{m \choose k} \sum_{j=1}^k\frac{1}{j}\bigg(k^m -(k-j)^m\bigg)$$
Rewrite $\frac{1}{j}\left(k^m-(k-j)^m\right)=\sum_{i=1}^m (-1)^{i+1}{m \choose i} j^{i-1}k^{m-i}$, then

\begin{align*}
g_m&=\frac{1}{m!}\sum_{k=1}^m (-1)^{m-k}{m \choose k} \sum_{j=1}^k\frac{1}{j}\bigg(k^m -(k-j)^m\bigg)\\
               &=\sum_{i=1}^{m}(-1)^{i+1}{m \choose i}\bigg[\frac{1}{m!}\sum_{k=1}^m (-1)^{m-k}{m \choose k}\bigg(\sum_{j=1}^k j^{i-1}k^{m-i}\bigg)\bigg]\\
               &=\sum_{i=1}^{m}(-1)^{i+1}{m \choose i}\frac{1}{i}       =H_m
\end{align*}

\end{proof}

Here the equality from the second to the third line follows from the following facts.
\begin{enumerate}
\item $\bigg(\sum_{j=1}^k j^{i-1}k^{m-i}\bigg)$ is polynomial of degree $m$ in $k$,
with leading term $\frac{k^m}{i}$;
\item the alternating sum $\Delta_{f,m}:=\sum_{k=0}^m (-1)^{m-k}{m \choose k} f(k)$ is the $m$-th forward difference of $f$ in $0$;
\item if $f$ is polynomial of degree $\leq m$ then $\Delta_{f,m}=m!\,[x^m] f(x)$.
\end{enumerate}
 
It follows that the final result is extremely simple:
\begin{thm}\label{theorem:genfun}
\[F_{d,g}(z)=(1-z)^{2g-2-5d}.\]
\end{thm}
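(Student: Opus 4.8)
The plan is to recognize that the two displayed results immediately preceding the statement already contain everything needed, so that the proof reduces to a single substitution. First I would recall the closed form produced by the Lagrange-inversion computation carried out above, namely
\[
F_{d,g}(z) = (1-z)^{m+2}\, e^{5d(1-z)G(z,e^{-z})}, \qquad m = 2g-4,
\]
so that the entire content of the theorem is concentrated in the exponent $5d(1-z)G(z,e^{-z})$.

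Next I would invoke the Lemma, which identifies $(1-z)G(z,e^{-z}) = -\log(1-z)$ as an equality of formal power series. Substituting this into the exponent gives $5d(1-z)G(z,e^{-z}) = -5d\log(1-z)$, and therefore, by the standard formal-power-series identity $e^{\alpha\log(1-z)} = (1-z)^{\alpha}$ (with $(1-z)^{\alpha}$ interpreted via the binomial series $\sum_{k\geq 0}\binom{\alpha}{k}(-z)^{k}$), one obtains $e^{5d(1-z)G(z,e^{-z})} = (1-z)^{-5d}$.

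Finally I would combine the two powers of $(1-z)$:
\[
F_{d,g}(z) = (1-z)^{m+2}(1-z)^{-5d} = (1-z)^{m+2-5d} = (1-z)^{2g-2-5d},
\]
the last equality using $m+2 = 2g-2$. This is precisely the asserted identity.

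The point worth emphasizing is that there is essentially no obstacle remaining at this stage: all of the genuine difficulty has already been absorbed into the two preceding results. The hard work lies upstream --- reorganizing the multi-index sum through the tree function $T(z) = ze^{T(z)}$ and the Lagrange inversion formula, and then establishing the strikingly clean identity $(1-z)G(z,e^{-z}) = -\log(1-z)$ via the forward-difference argument. The only care required in the final synthesis is bookkeeping: one must read $(1-z)^{2g-2-5d}$ as a formal power series, since the exponent can be negative, and this is legitimate precisely because the manipulation takes place at the level of formal exponentials and logarithms.
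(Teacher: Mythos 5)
Your proposal is correct and coincides with the paper's own derivation: the paper likewise obtains $F_{d,g}(z)=(1-z)^{m+2}e^{5d(1-z)G(z,e^{-z})}$ with $m=2g-4$ via the tree function and Lagrange inversion, then applies the lemma $(1-z)G(z,e^{-z})=-\log(1-z)$ to conclude $F_{d,g}(z)=(1-z)^{2g-2-5d}$. Your observation that all the substance lies upstream in those two results, with the final step being pure bookkeeping over formal power series, matches the paper exactly.
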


As a brief side note, we would like to discuss the general $F_{d,g}(q^{\lin'},q^\gamma)$ without $q^\gamma$ being set to $0$. Let $A(u):=-\sum_{m\geq 1} \bar{A}_1(m)u^m$ 
and $B(u):=-\sum_{m\geq 1} \bar{A}_2(m)u^m$. The same computation as above
gives
\[
F_{d,g} =\sum_{K\geq 0} [u^K]  e^{-KA(u)} (1+uA^\prime(u))^{-(m+1)} e^{5d \left(A(u)(1-uA^\prime(u))+B(u)\right)}.
\]
Let $h(u)$ denote the compositional inverse of the formal power series $u e^{A(u)}$, then
this can be rewritten as:
\[F_{d,g} = \left(1+h(1)A^\prime(h(1))\right)^{-(m+2)} \exp\bigg(5d\big(A(h(1))(1-h(1)A^\prime(h(1)))+B(h(1)\big)\bigg).
\]

\subsection{Conclusion} \label{s:A.3}
Theorem~\ref{theorem:genfun} implies that whenever $2-2g+5d\leq 0$, if $2-2g+5d+K>0$, $F_{K,d,g}$ is always $0$. Thus by adding multiples of $\gamma$, we do not gain new understanding beyond the degree bound. 

What if we use a general curve class $d_1 \lin + K \gamma + d_3 \gamma'$?
Let us define the number $F_{K, d_3, d, g}$ to be the corresponding coefficient in \eqref{e:FKg}, with the curve class $d \lin + K \gamma$ in \eqref{eqn:loc2} replaced by $d_1 \lin + K \gamma + d_3 \gamma'$.
Extensive numerical checks suggest that
\[
 F_{K,d_3,d_1,g}=(-1)^{d_3} F_{K+d_3,d_1,g} {K+d_3 \choose K}.
\]
As a result, the same degree bound remains.

\bibliographystyle{amsxport}
\bibliography{ref}

\end{document}